\documentclass[a4paper,12pt]{amsart}
\usepackage{CJK}
\usepackage{romannum}
\usepackage{amsfonts}
\usepackage{mathtools}
\usepackage{amsmath,amscd}

\usepackage{ifthen}
\usepackage{amsrefs}
\usepackage{mathrsfs}
\usepackage{amsthm}
\usepackage{amssymb}

\usepackage{tikz-cd}
\usepackage{graphicx}
\usepackage{relsize}

\usepackage{MnSymbol}

\usepackage{hyperref}
\usepackage{url}
\usepackage{etoolbox}
\usepackage{rotating}

\usepackage[shortlabels]{enumitem}
\usepackage[paper=a4paper,left=20mm,right=20mm,top=25mm,bottom=30mm]{geometry}


%
%
%

\usepackage{tikz}

\usetikzlibrary{backgrounds}
\usepackage{tkz-euclide}
\usepackage{xcolor}
\usetikzlibrary{trees,snakes,shapes.geometric}
\usepackage[outline]{contour}
\contourlength{1.5pt}
\usetikzlibrary{positioning}
\usetikzlibrary{%
  matrix,%
  calc,%
  arrows%
}

\setlist[enumerate]{topsep=0em, itemsep= -0em, parsep = 0 em, label=$(\alph*)$}

\let\emptyset\varnothing

\nocite{*}

\newcommand{\cA}{\mathcal{A}}

\newcommand{\cC}{\mathcal{C}}
\newcommand{\cD}{\mathcal{D}}

\newcommand{\cK}{\mathcal{K}}

\newcommand{\cN}{\mathcal{N}}

\newcommand{\cS}{\mathcal{S}}

\DeclareMathOperator{\rad}{rad}
\DeclareMathOperator{\rep}{rep}

\DeclareMathOperator{\Hom}{Hom}

\DeclareMathOperator{\modd}{mod}
\DeclareMathOperator{\supp}{supp}

\DeclareMathOperator{\Irr}{Irr}
\DeclareMathOperator{\Tr}{Tr}


\let\emptyset\varnothing

\newtheorem{lemma}{Lemma}

\newtheorem{Cor}[lemma]{Corollary}
\newtheorem{proposition}{Proposition}[section]
\newtheorem{Theorem}[proposition]{Theorem}
\newtheorem{Lemma}[proposition]{Lemma}

\newtheorem{corollary}[proposition]{Corollary}




\newenvironment{example}[1][Example.]{\begin{trivlist}
\item[\hskip \labelsep {\bfseries #1}]}{\end{trivlist}}

\newenvironment{Definition}[1][Definition.]{\begin{trivlist}
\item[\hskip \labelsep {\bfseries #1}]}{\end{trivlist}}

\DeclareUrlCommand\email{\urlstyle{rm}}

\author{Jie Liu }

\title{Middle terms of AR-sequences of graded Kronecker modules} 

\address{SUSTech International Center For Mathematics,   Southern University of Science and Technology, shenzhen 518055, China }

\address{Christian-Albrechts-Universität zu Kiel, Heinrich-Hecht-Platz 6, 24118 Kiel, Germany}

\begin{document}

\rmfamily


\pagenumbering{arabic}
\thispagestyle{empty}

\maketitle

\begin{center}
 \url{hbecun@foxmail.com}
 
 Tel.:+86 0755-88011089   \hspace{2mm} Fax: +86 0755-88010224 
\end{center}

\begin{abstract}
Let $(T(n),\Omega)$ be the covering of the generalized Kronecker quiver $K(n)$, where $\Omega$ is a bipartite orientation. Then there exists a reflection functor $\sigma$ on the category $\modd(T(n),\Omega)$.  Suppose that  $0\rightarrow X\rightarrow Y\rightarrow Z\rightarrow 0$  is an AR-sequence in the regular component $\cD$ of $\modd(T(n),\Omega)$, and  $b(Z)$ is the number of flow modules in the $\sigma$-orbit of $Z$. Then the middle term $Y$ is a sink (source or flow) module if and only if $\sigma Z$ is a sink (source or flow) module. Moreover, their radii and centers satisfy $r(Y)=r(\sigma Z)+1$ and $C(Y)=C(\sigma Z)$.

\end{abstract}

\providecommand{\keywords}[1]
{
  \small	
  \textbf{\text{Keywords:}} #1
}
\keywords{reflection functor; sink module;  radii}

2020  \textit{Mathematics subject classification:} 	16G20; 	16G70

\section{introduction}

\vspace{.3cm}

Let $k$ be a  field.    Recall that the generalized Kronecker quiver $K(n)$ is just the quiver with two vertices $1$, $2$, and $n$ arrows $\gamma_1, \cdots, \gamma_n:$ $1\rightarrow 2$. Let $\mathcal{K}_n=kK(n)$, and let $\rep_k(K(n))$  denote the finite-dimensional representations of $K(n)$. If we use $\modd\cK_n$ to denote the category of finite-dimensional modules of $\cK_n$,  then there  exists an equivalence between categories  $\rep_k(K(n))$ and  $\modd \cK_n$.   We will frequently identify these two subcategories.

 Since the representation type of $K(n)$ is wild \cite[1.3]{Kerner}, it is hopeless to classify all the indecomposable modules.  Hence it is prevalent to investigate the module category of Kronecker quiver concerning the  invariants. If we let  $(T(n),\Omega)$ be the universal covering of $K(n)$, then one practical way is to find invariants for those  modules of $K(n)$ which can be lifted to its covering $(T(n),\Omega)$,  where $\Omega$ is a fixed bipartite orientation (see (2.2)). In fact, $T(n)$ is  an $n$-regular tree.  In general, we can take  $k$-modules of the quiver $T(n)$ as graded Kronecker modules (cf.\cite{Claus}). We write $\modd(T(n),\Omega)$  to denote the  category of  finite-dimensional graded Kronecker modules (or simply modules).  There exists a reflection (or shift) functor  $\sigma:\mod(T(n),\Omega)\rightarrow \mod(T(n),\sigma\Omega)$ at all sinks,  where $\sigma \Omega$ is the orientation under the action of $\sigma$. Similarly, we have reflection functor $\sigma'$ for $\modd \cK_n$. Using the push-down functor $\pi_\lambda:\modd(T(n),\Omega)\rightarrow \modd \cK_n$ \cite[3.2]{Bongartz},  we show that   $\pi_\lambda \circ \sigma(M)=\sigma' \circ \pi_\lambda(M)$ for all $M\in\modd(T(n),\Omega)$. Let $M\in\modd(T(n),\Omega)$ be an indecomposable module. We say that $M$ is regular, provided $\sigma^t(M)\neq 0$ for all $t\in\mathbb{Z}$.  

Let $M\in \modd(T(n),\Omega)$  be a  regular indecomposable module, and let  $T(M)$ be the minimal tree containing the set $\{y\in T(n)_0\mid M_y \neq 0\}$.    In 2018, Claus Michael Ringel  introduces three types of modules: \textit{sink module, source module} and \textit{flow module} for $T(M)$  and presents two invariants: \textit{the smallest possible  radius} and the \textit{center path}  in the $\sigma$-orbit of module $M$ (see (2.5) and Section \ref{sec4}). Moreover, a module  $Y\in\modd(T(n),\Omega)$ is said to be a sink (source, or flow) module if all its indecomposable direct summands are sink (source, or flow) modules with the same center. In the end of the paper \cite{Claus}, Ringel asks readers a question: for an AR-sequence $0\rightarrow X\rightarrow Y\rightarrow Z\rightarrow 0$ consisting of regular modules, what the precise relationship is between the middle term $Y$ and $\sigma Z$. Since we know a little about the reflection functor $\sigma$ in his paper \cite{Claus}, it is hard to answer that question. In this article, we follow his work and build a bigger \textit{ball} (see (2.5)) that could let us see clearly the properties of reflection functor $\sigma$ and finally give a solution to Ringel's problem. In particular, we can write down the functor $\sigma$ individually for every indecomposable  module.

We show that the functor $\sigma$ induces an  equivalence between $k$-linear full subcategories of $\modd(T(n),\Omega)$ and $\modd(T(n),\sigma \Omega)$  with quasi-inverse $\sigma^-$.  This conclusion guarantees the indecomposability of regular modules of the category $\modd(T(n),\Omega)$ under the action of $\sigma$. Furthermore, it allows us to consider AR-sequences in the regular components and give the clues to solve Ringel's problem directly.

In Section \ref{sec4}, we focus on  a regular indecomposable module $M$ of mod$(T(n),\Omega)$.  It is also well-known that the regular components (or AR-components) of $K(n)$ are components of type $\mathbb{Z}A_{\infty}$ (cf.\cite{Claus1}), and it is the same for the  covering $T(n)$.  Suppose that $\cD$ is a regular component of mod$(T(n),\Omega)$.  Then for an AR-sequence $0\rightarrow X\rightarrow Y\rightarrow Z\rightarrow 0$ in $\cD$, the module $Y$ is a sink (source, or flow) module if and only if $\sigma Z$ is a  sink (source, or flow) module.

\section{preliminaries}
In this section, we present a few concepts and some basic results.  For  convenience, we will give some definitions in a short way. A thorough introduction to this part can be found in  \cite[\Romannum{2}-\Romannum{7}]{Assem1}, \cite{Claus} or \cite{Daniel}. Throughout, $k$ will  denote a field.
 
\vspace{0.3cm}

 \textit{(2.1) Auslander-Reiten theory} \label{Graph} Given a finite-dimensional  algebra $\mathcal{A}$ over $k$, let  $\modd \cA$ denote the category of  finite-dimensional right $\mathcal{A}$-modules. Let $X, Y\in \modd \mathcal{A}$. We say that a homomorphism $f: X\rightarrow Y$ is  \textit{irreducible},   provided $f$ is neither a split monomorphism nor a split epimorphism, and if $f=f_1\circ f_2$, then either $f_1$ is a split epimorphism or $f_2$ is a split monomorphism. Let $\rad_{\cA}$ be the radical of  $\modd \cA$. Then we can define the set
\begin{center}
$\Irr(X,Y):=\rad_{\cA}(X,Y)/\rad^2_{\cA}(X,Y)$
\end{center}
 and call it the \textit{space of irreducible morphisms} \cite[\Romannum{4}.1.6]{Assem1}.

\vspace{0.3cm}

Let $\modd \cA^{op}$ denote the $\cA$-dual category of $\modd \cA$.  We introduce the $\cA$-dual functor 
\begin{center}
$(-)^t=$ Hom$_{\cA}(-,\cA): \modd \cA \rightarrow \modd \cA^{op}$.
\end{center}
Let $M\in \modd \cA$, and let $P_1\xrightarrow{p_1}P_0\xrightarrow{p_0} M\rightarrow 0$ be a minimal projective representation of $M$. Using functor $(-)^t$, we can get an exact sequence of left $\cA$-modules 
\begin{center}
$0 \rightarrow M^t\xrightarrow{p^t_0} P^t_0\xrightarrow{p^t_1}P^t_1\rightarrow \text{Coker } p^t_1\rightarrow 0$.
\end{center}
We define  $\Tr M:=$ Coker $p^t_1$ and call it the \textit{transpose} of $M$. When $M$ is indecomposable but not projective, we have  $\Tr (\Tr M)\cong M$ \cite[\Romannum{4}.2.1$(c)$]{Assem1}. Let $D=\Hom_k(-,k)$. The Auslanter-Reiten \textit{translation} in $\modd\cA$ is defined by the composition   of $\Tr$ and $D$, that is, $\tau=D\circ\Tr$ and $\tau^{-1}= \Tr\circ D$.

 Let $N\in \modd \cA$ be an indecomposable module. We call $N$  \textit{preinjective}  provided   $\tau^{-t} N =0$ for some  $t\in \mathbb{N}_0$. Dually, module $N$ is called \textit{preprojective} provided  $\tau^{t}N=0$  for some $t\in \mathbb{N}_0$. Finally, module  $N$ is called \textit{regular} if $N$ is not preinjective or preprojective.

In fact,  there exists no  nonzero morphism from preinjecitve modules to preprojective or regular modules. When $N$ is decomposable, we sometimes say that $N$ is still  regular,  provided there does not exist  indecomposable direct summand which is preinjective or preprojective in $N$.

\begin{Definition}
 The \textit{Auslander-Reiten quiver} $\Gamma(\mathcal{A})$ of $\mathcal{A}$  is defined as follows:
 
 \begin{enumerate}
 \item The points $\Gamma(\mathcal{A})_0$ correspond to the isomorphism classes $[X]$ of indecomposable modules $X$ in $\modd\mathcal{A}$.
 
 \item Let $[X],[Y]$ be two points in  $\Gamma(\mathcal{A})_0$. Then there are $\dim_k\Irr(X,Y)$ arrows from $[X]$ to $[Y]$ in $\Gamma(\mathcal{A})_1$.

 \end{enumerate}

\end{Definition}
Let $X, Y\in\modd \cA$ be indecomposable modules. We say that $X$ is a \textit{predecessor} of $Y$, provided there exists a directed path from $[X]$ to $[Y]$ in $\Gamma(\cA)$, i.e. a
chain of irreducible maps from $X$ to $Y$. We say that $X$ is a \textit{sucessor} of $Y$, provided there exists a directed path from $[Y]$ to $[X]$ in $\Gamma(\cA)$, i.e. a chain of irreducible maps from $Y$ to $X$.

\begin{Definition} 
A connected component $\cD$ of $\Gamma(\cA)$  is called \textit{preprojective, preinjective, or regular,} provided modules contained in  it are all preprojective, preinjective, or regular. 
\end{Definition}

We use $(\rightarrow Y)$ to denote the subset of $\Gamma(\cA)_0$ consisting of $Y$ and all its predecessors, and $(Y\rightarrow)$ to denote the subset of $\Gamma(\cA)_0$ consisting of $Y$ and all its sucessors, respectively.

Let $X$ be a    non-projective indecomposable module (or let $Y$ be a  non-injective indecomposable module). Then there exists  a uniquely  determined short exact sequence, called \textit{Auslander-Reiten sequence}, or simply \textit{AR-sequence} (or \textit{almost split sequence})

\begin{center}
$0\rightarrow Y\xrightarrow{f} \oplus^t_{i=1} M^{n_i}_i \xrightarrow{g} X\rightarrow 0$,
\end{center}
where $Y$ (or $X$) is indecomposable, modules $M_i$ are pairwise non-isomorphic and indecomposable. Suppose that $f=\begin{bmatrix}
f_1 \\
\vdots\\
f_t
\end{bmatrix}, g=\begin{bmatrix}
g_1,\cdots,g_t
\end{bmatrix},$ where $f_i=\begin{bmatrix}
f_{i_1} \\
\vdots\\
f_{in_i}
\end{bmatrix},  g_i=\begin{bmatrix}
g_{i1},\cdots,g_{in_i}
\end{bmatrix} $. Then the maps $f_{i_1}, f_{i_2}, \cdots, f_{i_{n_i}}: Y\rightarrow M_i$  and $g_{i_1}, \cdots, g_{i_{n_i}}: M_i\rightarrow X$ correspond to the bases of  Irr$(Y,M_i)$ and Irr$(M_i, X)$, respectively. We write $Y=\tau X$ (or $X=\tau^-Y$) and we denote this in $\Gamma(\mathcal{A})$ by $[Y]\dashleftarrow [X]$. Note that we sometimes don't distinguish $X$ (or $Y$) and its isomorphism class $[X]$ (or $[Y]$) in $\Gamma(\cA)$.  It is well-known that the regular components of $\Gamma(\mathcal{K}_n)$ are the type of $\mathbb{Z}A_{\infty}$ (cf.\cite{Claus1}), and they are of the following form (see FIGURE 1).

\begin{figure}[!h]

\begin{center}

\begin{tikzpicture}[very thick,scale=0.7]

                    [every node/.style={fill, circle, inner sep = 1pt}]


\def \n {8} 

\def \m {4} 

\def \translation {1} 

\def \ab {0.15} 

\def \Pab {0.6} 

\def \lcone {1} 

\def \ldist {3} 

\def \lcolor {red} 

\def \rcone {1} 

\def \rdist {3} 

\def \rcolor {red} 

\def \llcone {0} 

\def \lldist {4} 

\def \rrcone {0} 

\def \rrdist {4} 


\foreach \a in {0,...,\n}{

\foreach \b in {0,...,\m}{

    \ifthenelse{\a = \n \and \b < \m}{


   \node[color=black] ({\a,\b,5})at ({\a*2*\Pab},{\b*2*\Pab}) {$\circ$};

     }

     {

      \ifthenelse{\b = \m \and \a < \n}{

      \node[color=black] ({\a,\b}) at ({\a*2*\Pab+\Pab},{\b*2*\Pab+\Pab}) {$\circ$};

      \node[color=black] ({\a,\b,5})at ({\a*2*\Pab},{\b*2*\Pab}) {$\circ$};

      }

      {

      \ifthenelse{\b = \m \and \a = \n}

     {\node[color=black] ({\a,\b,5})at ({\a*2*\Pab},{\b*2*\Pab}) {$\circ$};}

    {\node[color=black] ({\a,\b}) at ({\a*2*\Pab+\Pab},{\b*2*\Pab+\Pab}) {$\circ$};

    \node[color=black] ({\a,\b,5})at ({\a*2*\Pab},{\b*2*\Pab}) {$\circ$};

      }

      }

      }

    }

    }

\foreach \s in {0,...,\n}{

\foreach \t in {0,...,\m}{  

 \ifthenelse{\t = \m \and \s < \n}{

    \draw[->] (\s*2*\Pab+\ab,\t*2*\Pab+\ab) to (\s*2*\Pab+\Pab-\ab,\t*2*\Pab+\Pab-\ab); 

    \draw[->] (\s*2*\Pab+\Pab+\ab,\t*2*\Pab+\Pab-\ab) to (\s*2*\Pab+2*\Pab-\ab,\t*2*\Pab+\ab); 

 }{

   \ifthenelse{\s = \n \and \t < \m}{




   }

  {

  \ifthenelse{\s = \n \and \t = \m}{

    }{

   \draw[->] (\s*2*\Pab+\ab,\t*2*\Pab+\ab) to (\s*2*\Pab+\Pab-\ab,\t*2*\Pab+\Pab-\ab); 

   \draw[->] (\s*2*\Pab+\Pab+\ab,\t*2*\Pab+\Pab+\ab) to (\s*2*\Pab+2*\Pab-\ab,\t*2*\Pab+2*\Pab-\ab);

   \draw[->] (\s*2*\Pab+\ab,\t*2*\Pab+2*\Pab-\ab) to (\s*2*\Pab+\Pab-\ab,\t*2*\Pab+\Pab+\ab); 

   \draw[->] (\s*2*\Pab+\Pab+\ab,\t*2*\Pab+\Pab-\ab) to (\s*2*\Pab+2*\Pab-\ab,\t*2*\Pab+\ab);    

   }

   }

   }

    }

    }

\draw[->] (\n*2*\Pab+\ab,\m*\Pab+2*\Pab+\Pab+\Pab+\ab) to (\n*2*\Pab+\Pab-\ab,\m*\Pab+2*\Pab+\Pab+\Pab+\Pab-\ab);

\ifthenelse{\isodd{\m}}


 { 

  \node[color=black] (Dots1) at (0,\m*\Pab+2*\Pab+\Pab) {$\cdots$};

  \node[color=black] (Dots2) at (1+\n*2*\Pab,\m*\Pab+2*\Pab+\Pab) {$\cdots$};

   \ifthenelse{\isodd{\n}}{

  \node[color=black] (Dots3) at (-0.2*\n*2*\Pab,2*\m*\Pab+3*\Pab) {$\vdots$};}

  {\node[color=black] (Dots4) at (0.75*\n*2*\Pab,2*\m*\Pab+2*\Pab) {$\vdots$};} 

  }


  {

  \node[color=black] (Dots1) at (0,\m*\Pab+\Pab) {$\cdots$};

  \node[color=black] (Dots2) at (1+\n*2*\Pab,\m*\Pab+\Pab) {$\cdots$};

  \ifthenelse{\isodd{\n}}{

  \node[color=black] (Dots3) at (0*\n*2*\Pab,2*\m*\Pab+3*\Pab) {$\vdots$};}

  {\node[color=black] (Dots4) at (0.25*\n*2*\Pab,2*\m*\Pab+2*\Pab) {$\vdots$};}

  }

\ifthenelse{\translation = 1}{

   \foreach \s in {0,...,\n}{

   \foreach \t in {0,...,\m}{ 

   \ifthenelse{\s = 0}{}{

      \ifthenelse{\s = \n}{\draw[->,dotted,thin] (\s*2*\Pab-\ab,\t*2*\Pab) to (\s*2*\Pab-2*\Pab+\ab,\t*2*\Pab); }{

   \draw[->,dotted,thin] (\s*2*\Pab-\ab,\t*2*\Pab) to (\s*2*\Pab-2*\Pab+\ab,\t*2*\Pab); 

   \draw[->,dotted,thin] (\s*2*\Pab-\ab+\Pab,\t*2*\Pab+\Pab) to (\s*2*\Pab-2*\Pab+\Pab+\ab,\t*2*\Pab+\Pab); 

\draw[->,dotted,thin] (\n*2*\Pab-\ab+\Pab,\t*2*\Pab+\Pab) to (\n*2*\Pab-2*\Pab+\Pab+\ab,\t*2*\Pab+\Pab); 

   }

   }}

}}

{}  

                                                                                                                                                                                             \end{tikzpicture}

\end{center}

\caption{Regular component $\mathbb{Z}A_{\infty}.$}

\label{Fig:RCGM}

\end{figure}
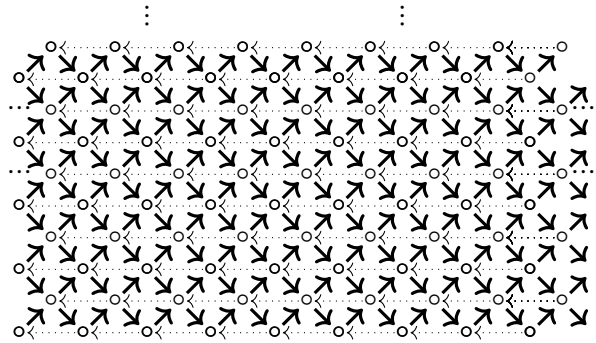

  A module $M$  is called \textit{ quasi-simple} if it is located in the bottom row of  a $\mathbb{Z}A_{\infty}$ component.  Let  $M$  be a module in a regular $\mathbb{Z}A_{\infty}$-component  $\cC$. Then there exists a  quasi-simple module $X\in \cC$ (or $Y\in \cC$)  such that  we can find a   chain of irreducible monomorphisms $X=X_1\rightarrow \cdots \rightarrow X_{s-1}\rightarrow X_s=M$ (or irreducible epimorphisms $M=Y_s\rightarrow Y_{s-1}\rightarrow \cdots \rightarrow Y_1=Y$). We call $s$ the \textit{quasi-length} of $M$ and the module $X$ (or $Y$) \textit{quasi-socle} (or \textit{quasi-top} ) of $M$. We call the factors $X_i/ X_{i-1}$  or $Y_i/Y_{i-1}$ the \textit{quasi-composition factors} of $M$. It can be shown that  $M$ is uniquely determined by its quasi-length and quasi-socle (or  quasi-top), whence we can define $ql(M):= s=$ quasi-length of $M$.

\vspace{0.3cm}	
	
\textit{(2.2) Graphs} \label{definition}	A \textit{graph} is a pair $G=(G_0, G_1):$   $G_0$ (whose elements called the vertices) and $G_1$ (whose elements called the edges). If $\{a, b\}$ is an edge, then $a,b$ are  called \textit{neighbours}.

For a graph $G$, there exsits a map $\Omega$, called \textit{orientation}  $:G \rightarrow G_0\times G_0$, such that $\Omega(\{a, a'\})$ is either ($a, a'$) or ($a', a$). We call $(G, \Omega)$  an \textit{oriented graph}. We  write $a\rightarrow a'$ if $\Omega(\{a, a'\})=(a, a')$ and call $a$ the \textit{start}, $a'$ the \textit{target}. Meanwhile we call $a$   a \textit{sink} (or a \textit{source}) if it is not a start (or the target, respectively) of any arrow. If any vertex is a sink or a source, then the orientation  $\Omega$ will be called \textit{bipartite}. A \textit{subgraph}    of a  graph $G=(G_0,G_1)$ is a graph  $G'=(G'_0,G'_1)$ such that $G'_0\subseteq G_0, G'_1\subseteq G_1$.

A  \textit{path} of length $t\geq 0$ in a graph $G$ is a finite sequence $(a_0, a_1,\cdots,a_t)$ of vertices such that   $a_{i-1}, a_i$ are neighbours for $1\leq i\leq t$, and $a_{j-1}\neq a_{j+1}$ for $1\leq j < t$. If $t=2r$, then $a_r$ is called its \textit{center} and $r$ its \textit{radius} \cite[2.1]{Claus}. If $t=2r+1$, then $\{a_r, a_{r+1}\}$ is called its \textit{center} and $r$ its \textit{radius}. Suppose that for any pair $a, b$ of vertices in $G$, there always exists a path connecting $a$ and $b$. Then  $G$ is said to be \textit{connected}. We call a path of length $t \geq 0$  a \textit{cycle} whenever its source and target coincide. A cycle of length $1$ is said to be a \textit{loop}. A graph $G$ is called \textit{acyclic} if it contains no cycles,  and it is called \textit{finite} if $G_0$ and $G_1$ are finite sets.

A graph $G$ is said to be a \textit{tree} if it is connected and paths connecting a vertex with itself are always length $0$. Let $G$ be a tree. Then  we define

\begin{center}
$d(a, b):=$ the length of a path connecting $a$ and $b$.
\end{center} 
This is well-defined since there exists only one path from $a$ to $b$ in the tree $G$. If $G'$ is a subset of tree $G$ and $x\in G$, we define

\begin{center}
$d(x, G'): =$min$\{d(x, a)\mid a \in G'\}$.
\end{center}
For every finite tree $G$, the paths of maximal possible length $d(G)$ are  called  \textit{diameters} of $G$. Let $r(G)$ denote the radius of $G$. It can be shown that for a finite tree $G$, all diameter paths of $G$ have the same center $C(G)$ \cite[2.1]{Claus}. Suppose that $(a_0,a_1,\cdots,a_t)$ is a diameter path of $G$.  We  define, 
\begin{center}
\[ C(G)=\begin{cases} 
     a_r & t=2r , \\
     \{a_r,a_{r+1}\} & t=2r+1.
   \end{cases}
\]
\end{center}

\vspace{0.3cm}

 \textit{(2.3) Quivers} A \textit{quiver} is just an oriented graph (loops and multiple arrows are allowed), usually denoted by  $Q=(Q_0, Q_1, s, t)$, where $s,t: Q_1\rightarrow Q_0$. For every $x\in Q_0$, we define a set  $\mathcal{N}(x):=\{y\in Q_0\mid \exists \alpha \in Q_1: t(\alpha)=y \text{ when } s(\alpha)=x \text{ or } s(\alpha)=y \text{ when } t(\alpha)=x\}$  and call it the \textit{neighbourhood} of $x$. When $\mid \mathcal{N}(x)\mid \leq 1$,  We say that $x$  is a  \textit{leaf}. A finite-dimensional representation $M=((M_x)_{x\in Q_0}, (M(\alpha))_{\alpha\in Q_1})$ over $Q$ consists of vector spaces $M_{x}$ and $k$-linear maps $M(\alpha): M_{s(\alpha)} \rightarrow M_{t(\alpha)}$ such that $\dim_{k}M:=\sum _{x\in Q_0}\dim_k M_{x}$ is finite. We thus define a category Rep$_k(Q)$  of  $k$-linear representations  of $Q$.  We denote by $\rep_k(Q)$ the full  subcategory of Rep$_k(Q)$ consisting of finite-dimensional representations.

Let $Q=(Q_0, Q_1, s, t)$ be a finite, connected, and acyclic quiver \cite[\Romannum{7}.5]{Assem1},  and let $n=\vert Q_0 \vert$. For every point $a\in Q_0$, we define a new quiver $\sigma_a Q=(Q'_0, Q'_1, s', t')$
 as follows: all the arrows of $Q$ having a as source or as  target are reversed, all other arrows remain unchanged. Hence there exists  a  reflection functor $\cS^+_a$ (or $\cS^{-}_a$) : rep$_k(Q)\rightarrow$ rep$_k(Q')$ when $a$ is a sink (or source). In fact, the functors $\cS^+_a$ and $\cS^-_a$ induce  equivalences between the $k$-linear full subcategories of $\modd(kQ)$ and $\modd(kQ')$ consisting of regular modules \cite[\Romannum{7}. Corollary 5.7]{Assem1}. In particular, functors $\cS^{+}_a$ and $\cS^-_a$ send regular indecomposable modules to regular indecomposable modules.  If $M\in$ mod$(kQ)$ is a regular indecomposable module, then  $\cS^-_a \cS^+_a(M)\cong M$.

\vspace{0.3cm}

\textit{(2.4) Push-down functor and graded Kronecker modules} Now we focus on the generalized Kronecker quiver $K(n)$ ($n\geq 3$). 
\[
\begin{tikzcd}
    1 \circ
    \arrow[r, draw=none, "{\vdots}" description]
    \arrow[r, bend left,        "\gamma_1"]
    \arrow[r, bend right, swap, "\gamma_n"]
    &
    \circ 2.
\end{tikzcd}
\]

Let $ \mathcal{K}_n=kK(n)$.   Let $\epsilon_i$ correspond to the trivial path point $i$, $i\in\{1, 2\}.$ In the following, we give a brief  construction of the covering $T(n)$ of Kronecker quiver $K(n)$ \cite[7.1]{Daniel}. Let $Q=(Q_0,Q_1,s,t)$ be a quiver.  We define formal inverse $(Q_1)^{-1}$ on $Q_1$, where

\begin{enumerate}
\item  $(Q_1)^{-1}:=\{\alpha^{-1}\mid \alpha \in Q_1\}$;

\item $s(\alpha^{-1}):=t(\alpha)$ and $t(\alpha^{-1}):=s(\alpha)$ for any $\alpha^{-1}\in (Q_1)^{-1}$;

\item  we say that $\omega$  is a \textit{walk} of $Q_1$, provided there exists $\omega=\alpha^{\varepsilon_n}_n\cdots \alpha^{\varepsilon_1}_1$, where $\alpha_i\in Q_1$, $\varepsilon\in\{1, -1\}$ and $s(\alpha^{\varepsilon_{i+1}}_{i+1})=t(\alpha^{\varepsilon_i}_i)$ for all $i<n$;
\item $s(\omega):=s(\alpha^{\varepsilon_1}_1)$ and $t(\omega):=t(\alpha^{\varepsilon_n}_n)$.

\end{enumerate}

Let $W:=\{\text{walks of $K(n)$}\}$. We introduce an equivalence relation $\sim$  on  $W$, which is generated by $\gamma^{-1}_i \gamma_i\sim \epsilon_1$ and $\gamma_i\gamma^{-1}_i\sim \epsilon_2$. We define an involution $(-)^{-1}$ on $W$, where 
\begin{center}
$(-)^{-1}$: $W\rightarrow W; (\alpha^{\varepsilon_n}_n\cdots \alpha^{\varepsilon_1}_1)^{-1} \mapsto \alpha^{-\varepsilon_1}_1\cdots \alpha^{-\varepsilon_n}_n$.
\end{center}
 Let $\pi(K(n))$  be the fundamental group of $K(n)$ in the point 1, i.e. $\pi(K(n))=\{[\alpha]\mid s(\alpha)=t(\alpha)=1\}$, where $[\alpha]$ is the equivalence classe of unoriented path $\alpha$. The multiplication is  the concatenation of paths. We define [$\omega$]$^{-1}$:=[$\omega^{-1}$] and the identity element is [$\epsilon_1$].  Then the quiver $T(n)$ is given by the following data:
\begin{enumerate}

\item  $(T(n))_0$ is the set of equivalence classes of walks starting in 1.

\item When $\omega^{'}\sim \gamma_i \omega$ for some $i\in\{1, \cdots, r\}$, we say that there exists an arrow from [$\omega$] to [$\omega^{'}$]. 
\end{enumerate}

Let $\pi: T(n)\rightarrow K(n); [\omega] \mapsto t(\omega), ([\omega]\rightarrow [\gamma_i \omega]) \mapsto \gamma_i$.  Action of group $G=\pi(K(n))$ on $T(n)$ is the concatenation of paths: 
\begin{center}
$g.[v]=[v\omega^{-1}]$ and
$g.([u]\rightarrow[\gamma_iu])=([u\omega^{-1}]\rightarrow [\gamma_i u\omega^{-1}])$,
\end{center}
where  $g=[\omega]\in\pi(K(n))$ and $[v], [u]\in T(n)_0$ with arrow $[u]\rightarrow [\gamma_i u]=[v]$.
Finally, we define $T^+_{n}:=\pi^{-1}(\{1\})$, $T^-_n:=\pi^{-1}(\{2\})$. Let $\Omega$ be the bipartite orientation on $T(n)$.  Then for   $M\in$ rep$_k(T(n),\Omega)$, the group $G$ acts on $M$ via:
\begin{center}
$M^g:=(((M^g)_x)_{x\in T(n)_0}$, $(M^g(\alpha))_{\alpha\in T(n)_1})$,
\end{center}
where $(M^g)_x:= M_{g.x}$ and $M^g(\alpha):=M(g.\alpha)$.

Now we fix the bipartite orientation $\Omega$ for $T(n)$, and  we define a push-down functor     $\pi_\lambda:$ rep$_k(T(n),\Omega)\rightarrow$ rep$_k(K(n)); M \mapsto (\pi_\lambda(M)_j, \pi_\lambda(M)(\gamma_i))$ factoring through $T(n)/G$, that is, $\pi_\lambda(M^g)=\pi_\lambda(M)$, $\forall g\in G$, where

\begin{equation*} 
\pi_\lambda(M)_j:=\bigoplus _{\pi(y)=j}M_y,
\end{equation*}

\begin{equation*} 
\pi_\lambda (M)(\gamma_i):=\bigoplus _{\pi(\beta)=\gamma_i} M(\beta):\pi_\lambda (M)_1\rightarrow \pi_\lambda (M)_2, j\in \{1, 2\},1\leq i\leq n.
\end{equation*}
 Note that  $\rep_k(T(n),\Omega)$ also admits AR-sequences and  the  functor $\pi_{\lambda}$ is  exact  (cf.\cite[2.2]{Bongartz}). 

 For any two sinks $x,y\in T(n)_0$, we can see that $\sigma_x \sigma_y=\sigma_y\sigma_x$ by the definition.   Then  we can define a reflection functor $\sigma:\rep_k(T(n),\Omega)\rightarrow\rep_k(T(n),\sigma\Omega)$ as the composition of the $\sigma_x$, for all sinks $x$. The functor $\sigma$ is independent of the order used in the composition, so that $\sigma$ is  well defined. We sometimes call such functor $\sigma$ the \textit{shift functor}.  By \cite[2.6]{Claus}, the composition $\sigma^2$ is the  translation  $\tau$.

 We call modules in $\rep_k(T(n),\Omega)$  the \textit{graded Kronecker modules} (or \textit{graded modules}, or simply \textit{modules}).  We often use  mod$(T(n),\Omega)$  to denote  those modules. Without specific emphasis,  we always mean quiver $(T(n),\Omega)$ when we talk about the tree $T(n)$. An indecomposable  module   $M\in\modd(T(n),\Omega)$ is said to be \textit{regular}  if  $\sigma^t(M)\neq 0$ for any $t\in \mathbb{Z}$. Let $\sigma'=\cS^+_2$ and $\sigma'^-=\cS^-_1$ for $K(n)$. 

\begin{lemma}\label{comm}
Let $M\in\modd (T(n),\Omega)$ be an indecomposable module. Then $\pi_\lambda( \sigma M)=\sigma'(\pi_\lambda(M))$ and $\pi_\lambda(\sigma^- M)=\sigma'^-(\pi_\lambda(M))$.

\end{lemma}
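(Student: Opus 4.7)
My plan is to verify the identity vertex by vertex, using the explicit kernel/cokernel descriptions of the BGP reflection functors and the bijection between arrows of $T(n)$ and arrows of $K(n)$ induced by $\pi$. Since $\Omega$ is bipartite, the sinks of $(T(n),\Omega)$ form exactly one of the two fibres; I fix conventions so that the sinks are $T^-_n=\pi^{-1}(\{2\})$ (the other case being symmetric), so that $\sigma=\prod_{x\in T^-_n}\sigma_x$ should match $\sigma'=\cS^+_2$ on the Kronecker side. Because $T(n)$ is $n$-regular and $\pi$ is a covering, each sink $x\in T^-_n$ has exactly $n$ neighbours in $T^+_n$, and for every $y\in T^+_n$ there is a unique arrow $\beta:y\to x$ with $\pi(\beta)=\gamma_i$, for each $i\in\{1,\dots,n\}$.

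First I would treat the vertex $1$ of $K(n)$. By the BGP construction, $(\sigma M)_y=M_y$ for all sources $y\in T^+_n$, hence
\[
\pi_\lambda(\sigma M)_1=\bigoplus_{y\in T^+_n}(\sigma M)_y=\bigoplus_{y\in T^+_n}M_y=\pi_\lambda(M)_1=\sigma'(\pi_\lambda M)_1.
\]
Next, for the vertex $2$, the BGP description at each sink $x\in T^-_n$ gives
\[
(\sigma M)_x=\ker\!\Big(\bigoplus_{\beta:y\to x}M_y \xrightarrow{(M(\beta))} M_x\Big).
\]
Summing over $x\in T^-_n$ and pulling the direct sum inside the kernel, I would then use the bijection $\{\beta \in T(n)_1\}\longleftrightarrow T^+_n\times\{\gamma_1,\dots,\gamma_n\}$, $\beta\mapsto(s(\beta),\pi(\beta))$, to reindex the total source as $\bigoplus_{i=1}^n\bigoplus_{y\in T^+_n}M_y=\bigoplus_{i=1}^n \pi_\lambda(M)_1$. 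By the very definition $\pi_\lambda(M)(\gamma_i)=\bigoplus_{\pi(\beta)=\gamma_i}M(\beta)$, the $i$-th component of the resulting map is precisely $\pi_\lambda(M)(\gamma_i)$, so
\[
\pi_\lambda(\sigma M)_2=\ker\!\Big(\bigoplus_{i=1}^n \pi_\lambda(M)_1 \xrightarrow{(\pi_\lambda(M)(\gamma_i))} \pi_\lambda(M)_2\Big)=\sigma'(\pi_\lambda M)_2.
\]
A parallel verification for the new arrows (which on both sides are given by the canonical projections from the kernel onto the summands indexed by $i$ or by the lifts $\beta$) finishes the identification as graded Kronecker modules.

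For the second identity, I would run the dual argument: the sources of $(T(n),\Omega)$ coincide with the fibre $T^+_n$, and $\sigma^-$ replaces $(\sigma^-M)_y$ by the cokernel of $\bigoplus_{\beta:y\to x}M_x\to \cdot$ at each source $y\in T^+_n$, while $\sigma'^{-}=\cS^{-}_1$ does the analogous cokernel construction at vertex $1$; exchanging $\ker$ with $\coker$ and reversing the role of $T^+_n$ and $T^-_n$ gives the claim.

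The only genuinely delicate step is the reindexing in the displayed computation: one must confirm that the bijection $\beta\mapsto(s(\beta),\pi(\beta))$ is well defined and identifies the structure maps canonically on both sides, i.e.\ that no auxiliary choice (ordering of arrows, splitting of the direct sum) is hidden in the identification. Since $\pi$ is a covering and $\pi_\lambda$ is defined so that $\pi_\lambda(M^g)=\pi_\lambda(M)$ for every $g\in G$, this compatibility is forced by functoriality, and after spelling it out the remaining verifications are routine diagram chases.
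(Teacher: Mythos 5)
Your proposal is correct and follows essentially the same route as the paper's proof: a vertex-by-vertex comparison using the explicit kernel description of the BGP reflection at the sinks, the commutation of kernels with direct sums, and the reindexing of arrows over a sink fibre via $\beta\mapsto(s(\beta),\pi(\beta))$, followed by the check that the structure maps (inclusion into the total sum composed with projection) agree on both sides; the second identity is handled by the dual cokernel argument in both treatments. The only difference is cosmetic — the paper relabels the vertices of the reflected Kronecker quiver while you keep the original labels, and the paper separates out the trivial case $\sigma M=0$, which your uniform computation covers anyway.
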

\begin{proof}
We only prove $\pi_\lambda(\sigma M)=\sigma'(\pi_\lambda(M))$, and it is similar for the other one. If $\sigma M=0$, then $\pi_{\lambda}(M)$ is isomorphic to a projective module, whence there is nothing to prove. Suppose that $\sigma M\neq 0$. We need to show that the following diagram commutes.

\begin{center}
\begin{tikzcd}
\text{mod}(T(n),\Omega) \arrow{r}{\pi_\lambda} \arrow{d}{\sigma}
&\text{mod } \mathcal{K}_n\arrow{d}{\sigma’}\\
\text{mod} (T(n),\sigma\Omega) \arrow{r}{\pi_\lambda} & \text{mod } \mathcal{K}_n
\end{tikzcd}
\end{center}
Following the definition,  we have

\begin{equation*} 
\pi_\lambda(M)_j=\bigoplus _{\pi(y)=j}M_y
\end{equation*}

\begin{equation*} 
\pi_\lambda (M)(\gamma_i)=\bigoplus _{\pi(\beta)=\gamma_i} M(\beta):\pi_\lambda (M)_1\rightarrow \pi_\lambda (M)_2, 
\end{equation*}
for $j\in\{1,2\}$ and $1\leq i\leq n$. Let $N=\sigma \pi_{\lambda}(M)$,  where

 \begin{equation*} 
N_2=\pi_\lambda (M)_1=\bigoplus _{\pi(y)=1}M_y, 
\end{equation*}
and 

 \begin{equation*} 
\begin{split}
N_1& = (\sigma \pi_{\lambda}(M))_1 \\
 & = \text{ker}\ (\bigoplus_{\gamma_j}\pi_{\lambda}(M)_{s(\gamma_j)}\rightarrow \pi_{\lambda} (M)_2)\\ & =\text{ker}\ (\bigoplus_{\gamma_j}\bigoplus_{\pi(\beta)=\gamma_j}M_{s(\beta)} \rightarrow  \bigoplus_{\pi(z)=2}M_z).
\end{split}
\end{equation*}
 Each $N(\gamma_i)$ is the composition of the inclusion of $N_1$ into  $\bigoplus_{\gamma_j}\pi_{\lambda}(M)_{s(\gamma_j)}$ with the projection onto the direct summand  $\pi_{\lambda}(M)_{s(\gamma_i)}$ \cite[\Romannum{7}.5.5.]{Assem1}.  On the other hand, let $N'=\sigma'(M)$ and $N''=\pi_\lambda(N')$.  Note that, 

\begin{equation*} 
N''_2=\bigoplus _{\pi(y')=2}N'_{y'}=\bigoplus _{\pi(y)=1}M_y=N_2,
\end{equation*}
and

\begin{equation*} 
\begin{split}
N''_1  & = \bigoplus _{\pi(y')=1}N'_{y'} \\ & = \bigoplus _{\pi(y')=1}{\sigma(M)}_{y'} \\ & = \bigoplus_{\pi(z)=2} \text{ker} \ (\bigoplus_{\beta:s(\beta)\rightarrow z}M_{s(\beta)} \rightarrow M_z)\\& = \text{ker}\ (\bigoplus_{\gamma_j}\bigoplus_{\pi(\beta)=\gamma_j}M_{s(\beta)} \rightarrow  \bigoplus_{\pi(z)=2}M_z)  \\
 & =\text{ker}\ (\bigoplus_{\gamma_j}\pi_{\lambda}(M)_{s(\gamma_j)}\rightarrow \pi_{\lambda} (M)_2)\\&=N_1.
\end{split}
\end{equation*}
Each $N''(\gamma_i)$ is  the composition  of inclusion of $N''_1=N_1$ into  $\bigoplus_{\gamma_j}\pi_{\lambda}(M)_{s(\gamma_j)}$ with the projection onto the direct summand  $\pi_{\lambda}(M)_{s(\gamma_i)}$. Then we know that 
\begin{center}
$N({\gamma_i})=N''({\gamma_i})$, $1\leq i\leq n$.
\end{center}
Hence $N= N''$.

\end{proof}

\begin{lemma}\label{S1}
Functors $\sigma$ and $\sigma^- $ send regular indecomposable modules to regular indecomposable modules.
\end{lemma}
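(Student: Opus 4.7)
The plan is to reduce the statement to the analogous property of the reflection functor $\sigma'$ on $\modd\cK_n$, which is already recorded in (2.3), by passing through the push-down functor $\pi_\lambda$ and Lemma \ref{comm}. Let $M\in\modd(T(n),\Omega)$ be regular indecomposable; I will argue for $\sigma$, the case of $\sigma^-$ being entirely symmetric.

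First I note that $\sigma M$ is automatically regular in $\modd(T(n),\sigma\Omega)$: for every $t\in\mathbb{Z}$, $\sigma^t(\sigma M)=\sigma^{t+1}M\neq 0$ directly from the definition of regularity of $M$, and in particular $\sigma M\neq 0$. All the work is therefore in showing that $\sigma M$ is \emph{indecomposable}.

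The key step will be to verify that $\pi_\lambda M$ is a regular indecomposable $\cK_n$-module. For indecomposability I invoke the Galois covering theory: the fundamental group $G=\pi(K(n))$ is free and acts freely on the vertex set of $T(n)$, so any finitely supported indecomposable $M$ on $T(n)$ has trivial stabilizer — indeed, if $M^g\cong M$ for some $g\in G$, then $g$ permutes the finite support $\{y:M_y\neq 0\}$, which by freeness of the action forces $g=1$. By the standard Galois covering result (cf.\ \cite[3.2]{Bongartz}), this gives $\pi_\lambda M$ indecomposable. For regularity, Lemma \ref{comm} yields $(\sigma')^t(\pi_\lambda M)=\pi_\lambda(\sigma^t M)$ for every $t\in\mathbb{Z}$, and the right-hand side is non-zero because $\sigma^t M\neq 0$ and $\pi_\lambda$ sends non-zero modules to non-zero modules (the dimensions add over $G$-orbits).

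Once $\pi_\lambda M$ is known to be regular indecomposable, I apply the reflection functor result from (2.3) to conclude that $\sigma'(\pi_\lambda M)$ is again regular indecomposable in $\modd\cK_n$; by Lemma \ref{comm} this module equals $\pi_\lambda(\sigma M)$. To finish, suppose for contradiction that $\sigma M=N_1\oplus N_2$ with $N_1,N_2$ both non-zero. Then $\pi_\lambda(\sigma M)=\pi_\lambda N_1\oplus \pi_\lambda N_2$ would be a non-trivial decomposition, contradicting the indecomposability just established. Hence $\sigma M$ is indecomposable, and it is regular by the opening observation. The main obstacle is the intermediate step concerning indecomposability of $\pi_\lambda M$; the rest is a formal transfer through the commutative square of Lemma \ref{comm} together with the corresponding reflection result for the base quiver $K(n)$.
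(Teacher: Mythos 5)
Your proof is correct and follows essentially the same route as the paper: push $M$ down to $\modd\cK_n$, use the commutation $\pi_\lambda\circ\sigma=\sigma'\circ\pi_\lambda$ of Lemma \ref{comm}, and invoke the fact that $\sigma'$ preserves regular indecomposable $K(n)$-modules. The only difference is that you spell out the covering-theoretic argument that $\pi_\lambda M$ is regular indecomposable (trivial stabilizer, hence indecomposable image under the push-down), a step the paper simply cites from \cite[6.3]{Claus4}.
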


\begin{proof}
Let $M\in \modd(T(n),\Omega)$ be a regular indecomposable module.     According to  \cite[6.3]{Claus4}, we can see that  $\sigma'(\pi_\lambda(M))$ is regular  indecomposable. However, Lemma \ref{comm} tells us that $\pi_\lambda ( \sigma M)=\sigma' ( \pi_\lambda(M))$, whence $\sigma M$ is regular indecomposable. For the functor $\sigma^-$, the proof is similar.
\end{proof}

\vspace{0.3cm}

\textit{(2.5) Index and Balls}\label{Ball}
 Let $ \emptyset\neq S\subseteq T(n)_0$ be a set of vertices, and let $M\in\modd(T(n),\Omega)$.
 \begin{enumerate}
 \item[•] The unique minimal tree in $T(n)$ containing $S$ is denoted by $T(S)$.
 
 \item[•] A vertex $x\in T(S)_0$ is said to be  a \textit{leaf} of $T(S)$ if $\mid \mathcal{N}(x)\cap T(S)_0\mid \leq 1$.
 
 \item[•]  The set $\supp(M):=\{y\in T(n)_0\mid M_y\neq (0)\}$ is called the \textit{support} of $M$, and the vertex $x$ is said to be a \textit{leaf} $M$ if $x$ is a leaf of $T(M):=T(\text{supp}(M))$. 
 
 \end{enumerate}

 Let $M\in \modd(T(n),\Omega)$ be an indecomposable module. Since $\dim_k M<\infty$,  $T(M)=T(\supp(M))$ is a finite tree. Using the fact that all diameter paths of a finite tree have the same center (cf.\cite{Claus}), we can  define

\begin{center}
$d(M):=$ the length of diameter paths of $M$,  

$C(M):=$ the center of diameter paths of $M$,

$r(M):=$ the radius of  diameter paths of $ M$.
\end{center}
We call them  the \textit{diameter},  \textit{center},  and  \textit{radius}  of $M$, respectively  \cite[2.4]{Claus}. Actually, we have $d(M)=d(T(M))$,  $C(M)=C(T(M))$ and $r(M)=r(T(M))$. An indecomposable module $M$ is said to be a \textit{sink} (\textit{source})  module if a diameter path (and thus all) of $T(M)$  starts and ends in sinks (sources). Otherwise we call  module $M$ a \textit{flow} module. For any $ x, c, c_1, c_2\in T(n)_0$ and $r\in \mathbb{N}$ \cite[2.3]{Claus}, let $B_r(c)=\{x\in T(n)_0 \mid d(x,c) \leq r\}$, $B_r(c_1,c_2)=\{x \in T(n)_0\mid d(x, c_i)\leq r\}$, $i\in \{1,2\}$.
We call such sets $B_r(c)$ and $B_r(c_1, c_2)$ the \textit{balls} with radius $r$ and with center $c$ or $\{c_1, c_2\}$, respectively.

We now recall some important results in \cite{Claus}.

\begin{Theorem} \label{Theorem 1}\cite[Theorem 1]{Claus}
Let $M$ be a regular indecomposable module of mod$(T(n),\Omega)$. Then the shift orbit of $M$ contains a unique sink module $M_0$ with smallest possible radius, saying with radius $r_0=r_0(M)$. Let $M_i=\sigma^i M_0$ for all $i\in\mathbb{Z}$. Then there is an integer $0\leq b\leq r_0(M)$ and a path $(a_0,\cdots, a_b)$ in $T(n)$ with the following properties:
\begin{itemize}

\item[(1)] For $i\geq 0$, the module $M_{-i}$ is a sink module with radius $r_0+i$ and center $a_0$.

\item[(2)] For $1\leq i\leq b(M)$, the module $M_i$ is a flow module with radius $r_0-1$ and center $\{a_{i-1},a_i\}$.

\item[(3)] For $i\geq 0$, the module $M_{b+1+i}$ is a source module with radius $r_0+i$ and center $a_b$.
\end{itemize}
The integer $r_0(M)$ is positive. If $r_0(M)$ is even, then $a_0$ is a sink, otherwise a source.

\end{Theorem}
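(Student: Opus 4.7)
The plan is to derive the orbit structure by analyzing how the shift functor $\sigma$ transforms the triple $(T(M), r(M), C(M))$ associated to an indecomposable regular module $M$. Recall that $\sigma$ applies a reflection at every sink of $\Omega$ simultaneously: it replaces the stalk at each sink $x$ by the kernel of $\bigoplus_{y \to x} M_y \to M_x$ and leaves the other stalks unchanged. The whole proof reduces to three local lemmas on this operation, combined with the minimality of $r_0$.

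First I would establish three local effects. (a) If $M$ is a source module, then all leaves of $T(M)$ are sources of $\Omega$, and $\sigma$ inserts a new nonzero stalk at every neighbour (in $T(n)$) of each such leaf, so $T(\sigma M)$ equals $T(M)$ with one outer layer added, giving $r(\sigma M) = r(M) + 1$ and $C(\sigma M) = C(M)$; dually, $\sigma^-$ applied to a sink module adds one outer layer and preserves the center. (b) If $M$ is a sink module of smallest radius $r_0$ in its $\sigma$-orbit, then every leaf of $T(M)$ is a sink of $\Omega$ with exactly one $T(M)$-neighbour, so the kernel at each leaf vanishes and the outer layer is erased; minimality forbids any new stalk from appearing outside $T(M)$, because otherwise $\sigma^-(\sigma M)$ would be another sink module in the orbit with smaller radius. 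Consequently $\sigma M$ is a flow module with radius $r_0 - 1$ whose center is an edge $\{a_0, a_1\}$ incident to $a_0 = C(M)$. (c) If $M$ is a flow module with center $\{a_{i-1}, a_i\}$ and radius $r_0 - 1$, then a similar kernel analysis shows that $\sigma M$ is either another flow module of radius $r_0 - 1$ with center $\{a_i, a_{i+1}\}$ for some neighbour $a_{i+1}$ of $a_i$ in $T(M)$, or a source module with center $a_i$ and radius $r_0$.

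Lemma~\ref{S1} guarantees that every $M_i = \sigma^i M_0$ is regular and indecomposable, so the iterations above stay inside the orbit. Existence of $M_0$ with smallest radius follows because the map $i \mapsto r(M_i)$ is strictly decreasing on the sink era and strictly increasing on the source era, so the minimum is attained uniquely on the sink side. Part (1) then follows by iterating the dual of (a) backwards from $M_0$; part (2) by iterating (c) forwards from $\sigma M_0$, tracing out a geodesic $(a_0, \ldots, a_b)$ inside $T(M_0)$ until the flow terminates, with $b \le r_0$ since the geodesic lies inside the tree $T(M_0)$ of radius $r_0$; part (3) by iterating (a) forwards from $M_{b+1}$. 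For the parity claim, the bipartite orientation $\Omega$ makes two vertices of the same sink/source type iff their distance is even, and the leaves of $T(M_0)$ are sinks at distance $r_0$ from $a_0$, so $a_0$ is a sink iff $r_0$ is even. Positivity of $r_0$ holds because a radius-$0$ sink module would be simple projective, contradicting regularity. The main obstacle is the kernel analysis in (b) and (c): one must verify precisely which stalks of $\sigma M$ vanish and which become nonzero, which requires combining the minimality of $r_0$ with the fact that $T(n)$ is $n$-regular with $n \ge 3$, so that enough neighbours exist for the kernel to be nonzero exactly where predicted.
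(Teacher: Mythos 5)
First, a point of comparison: the paper offers no proof of this statement at all. It is quoted verbatim from \cite[Theorem~1]{Claus} in the list of recalled results, so there is no internal argument to measure yours against. The honest benchmark is Ringel's own proof, and your outline does follow its architecture: everything is reduced to three local statements about how $\sigma$ transforms a sink, flow, or source module --- these are precisely Propositions~\ref{m sink}, \ref{Flow} and \ref{3.3} as recalled in the paper --- and the global orbit picture is then assembled from them together with minimality of $r_0$.

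As a proof, however, your text has genuine gaps. The most visible is in your step (b): minimality of $r_0$ only excludes case $(1)$ of Proposition~\ref{m sink} (that $\sigma M_0$ is again a sink module), not case $(3)$; when $b=0$ the module $\sigma M_0$ is already a source module, so your claim that $\sigma M_0$ is necessarily a flow module of radius $r_0-1$ is false and the $b=0$ branch of the theorem is never produced. More seriously, the local lemmas (a)--(c) are the entire technical content of the theorem and you do not prove them, while the micro-justifications you do give are incorrect: being a sink module only guarantees that the endpoints of \emph{diameter paths} are sinks, not that every leaf of $T(M)$ is a sink (a short branch off the center can end in a source); and at a sink leaf $x$ with unique supported neighbour $y$ the new stalk is $\ker(M_y\to M_x)$, which vanishes only if $M(y\to x)$ is injective --- this is exactly where regularity and indecomposability must enter, and Proposition~\ref{m sink} explicitly allows $d(\sigma M)=d(M)$, so ``the outer layer is erased'' is not automatic. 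Finally, the existence of a sink module somewhere in the shift orbit, which is the starting point of your minimality argument, is asserted rather than proved; one must argue that iterating $\sigma^{-}$ from an arbitrary regular indecomposable eventually produces a sink module, e.g.\ because by the duals of Propositions~\ref{m sink}--\ref{3.3} the radius strictly decreases along a backwards source era and the flow era is finite. If you are allowed to quote Propositions~\ref{m sink}, \ref{Flow}, \ref{3.3} as known (as the paper does), your assembly of the orbit structure is essentially right once the $b=0$ case and the existence of a sink module are supplied; if not, the proof is missing its core.
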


Let  $M\in\modd(T(n),\Omega)$ be a regular indecomposable module. According to Theorem \ref{Theorem 1},   all the modules $M_i$ with $i\leq 0$ are sink modules. Moreover, we define its \textit{index} $\iota(M):=t$ when $M=\sigma^t M_0, t\in \mathbb{Z}$.

\begin{Theorem}\cite[Theorem 3]{Claus} \label{Theorem 3}
If $M$, $M'\in$ mod$(T(n),\Omega)$ are regular indecomposable modules and $M\rightarrow M'$ is an irreducible map, then $\iota(M')=\iota (M)-1$.
\end{Theorem}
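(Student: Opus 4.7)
The plan is to reduce the claim to a single combinatorial identity on the $\mathbb{Z}A_{\infty}$-shape of the regular AR-component, and then exploit $\sigma$-equivariance to propagate it. The equivalence between the regular subcategories of $\modd(T(n),\Omega)$ and $\modd(T(n),\sigma\Omega)$ promised in the introduction (with quasi-inverse $\sigma^{-}$) ensures that $\sigma$ and $\sigma^{-}$ send irreducible maps to irreducible maps. Combined with the definitional identity $\iota(\sigma N)=\iota(N)+1$, it follows that the conclusion $\iota(M')=\iota(M)-1$ is invariant under simultaneous shifting; so it suffices to verify the identity for one convenient pair in the $\sigma$-orbit of $(M,M')$.

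Since the component $\cD$ containing $M$ is of type $\mathbb{Z}A_{\infty}$, I introduce coordinates $(p,q)$ with $p\in\mathbb{Z}$ and $q\geq 0$, normalized so that $\tau(p,q)=(p-1,q)$ and so that the irreducible arrows out of $(p,q)$ point to $(p+1,q-1)$ (when $q\geq 1$) and to $(p+1,q+1)$, with $q+1$ the quasi-length. Because $\sigma^{2}=\tau$ (\cite[2.6]{Claus}), the identity $\iota(p-1,q)=\iota(p,q)+2$ determines $\iota$ on each row up to an additive constant: writing $\iota(p,q)=-2p+g(q)$, the target relation collapses to $g(q\pm 1)=g(q)-1$ for the two arrow types, and the compatibility with $\tau$ shows that either of these equations forces the other.

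Thus the task is reduced to verifying $\iota(M')=\iota(M)-1$ for one well-chosen irreducible map $M\to M'$. A natural choice is to take $M=M_{0}$ of minimal radius $r_{0}$ as in Theorem~\ref{Theorem 1} and let $M'$ be the target of an irreducible epimorphism out of $M$ lowering the quasi-length by one. Using the explicit orbit structure $(a_{0},\dots,a_{b})$ attached to $M$ by Theorem~\ref{Theorem 1}, one reads off the support, center and radius of $M'$, and then re-applies Theorem~\ref{Theorem 1} to the orbit of $M'$ in order to locate \emph{its} own minimal sink representative, from which $\iota(M')$ can be computed.

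The main obstacle I foresee is precisely this comparison: \emph{a priori} $M$ and $M'$ lie in different $\sigma$-orbits (though in the same AR-component), so their minimal sink representatives $M_{0}$ and $M'_{0}$ are not tied together in an obvious way. The crucial combinatorial step will be a linking lemma asserting that passing from $M$ to its quasi-top/socle quotient shifts the distinguished minimal sink module by exactly one application of $\sigma^{-}$, which is exactly the index decrease by $1$ we want. Once this link is nailed down, the conclusion follows by a case inspection of the three regimes $i\leq 0$, $1\leq i\leq b$ and $i\geq b+1$ in Theorem~\ref{Theorem 1}, together with the $\sigma$-equivariance from the first step.
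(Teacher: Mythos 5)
The paper itself offers no proof of this statement: it is imported verbatim from Ringel \cite{Claus} (his Theorem~3), so the only ``proof'' in the source is the citation. Judged on its own terms, your proposal contains a genuine gap, and you have located it yourself: everything up to the final paragraph is reduction, and the step carrying all of the content is exactly the ``linking lemma'' you leave unproved. The reductions themselves are sound but buy less than you claim. The $\sigma$-equivariance (legitimate via the equivalence of Lemma~\ref{crf} together with the definitional identity $\iota(\sigma N)=\iota(N)+1$) only collapses arrows within a single $\sigma$-orbit of arrows, and the $\tau=\sigma^{2}$ bookkeeping only identifies the ``quasi-length raising'' and ``quasi-length lowering'' relations between one fixed pair of adjacent rows of the $\mathbb{Z}A_{\infty}$ component. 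Since each row is its own $\sigma$-orbit, you are left with one unverified identity for \emph{every} quasi-length $q$, not ``one convenient pair'': namely, for an arbitrary regular indecomposable $M$ and its irreducible quotient $M'$ with $ql(M')=ql(M)-1$, you must compare the minimal-radius sink representatives $M_{0}$ and $M'_{0}$ of two \emph{different} shift orbits and show they sit in the correct relative position. There is no free propagation between rows from the mesh relations either, because transferring the identity from $Y^{1}$ to $Y^{2}$ in an AR-sequence $0\rightarrow \tau Z\rightarrow Y^{1}\oplus Y^{2}\rightarrow Z\rightarrow 0$ already presupposes $\iota(Y^{1})=\iota(Y^{2})$, which is an instance of what is being proved.

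That cross-orbit comparison is the whole theorem, and it is not a formality: in \cite{Claus} it is established by a concrete support analysis (Ringel's Lemma~3 and Lemma~$3^{*}$, which compare $B(Y)$ with $B(Z)$ for quasi-top and quasi-socle pairs --- the very lemmas this paper invokes later in the proof of Theorem~\ref{Lemma 3.3}), combined with the case analysis of Propositions~\ref{m sink}--\ref{3.3}. Until your linking lemma is actually proved along such lines, the proposal is a restatement of the theorem in normalized coordinates rather than a proof of it.
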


Note that the regular Auslander-Reiten components of the category $\modd(T(n),\Omega)$ are of the form $\mathbb{Z}A_{\infty}.$ We now introduce an operator $\eta$ on the set of isomorphism classes of regular indecomposable modules.  If $\cD$ is a regular component of $\modd(T(n),\Omega)$ and $0\rightarrow X\rightarrow Y\oplus Y'\rightarrow Z\rightarrow 0$ is an Auslander-Reiten sequence in $\cD$ with $Y,Y'$  indecomposable and $r(Y)<r(Y')$, then we define $\eta(Y)=Y'$. 

\begin{Theorem}\label{Theorem 4}

\cite[Theorem 4]{Claus} Let $Y\in$ mod$(T(n),\Omega)$ be an indecomposable regular module. Then $Y$ is a sink module (or a flow module, or a source module), if and only if $\eta Y$ is a sink module (or a flow module, or a source module, respectively). Also, $\eta Y$ has the same center as $Y$  and  $r(\eta Y)=r(Y)+2$.
\end{Theorem}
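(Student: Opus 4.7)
The plan uses the defining AR-sequence $0 \to X \to Y \oplus \eta Y \to Z \to 0$ (with $r(Y) < r(\eta Y)$) together with Theorems~\ref{Theorem 1} and~\ref{Theorem 3}, reducing the general claim to a base-case analysis. First, by Theorem~\ref{Theorem 3} applied to this AR-sequence, $\iota(Y) = \iota(\eta Y) = \iota(X) - 1$: the two summands of the middle term share the same $\sigma$-orbit index. By Theorem~\ref{Theorem 1}, the sink/source/flow type, center, and radius of a regular indecomposable are all determined by its index relative to the orbit parameters $(r_0, b, a_0, \ldots, a_b)$. Hence the theorem reduces to showing that the $\sigma$-orbits of $Y$ and $\eta Y$ share the same $b$ and the same path $(a_0, \ldots, a_b)$, with sink-radius parameter satisfying $r_0^{\eta Y} = r_0^Y + 2$.

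Next, I would observe that $\sigma$ intertwines with $\eta$. Applying the equivalence $\sigma$ to the AR-sequence yields $0 \to \sigma X \to \sigma Y \oplus \sigma(\eta Y) \to \sigma Z \to 0$, another AR-sequence whose middle summands are $\sigma Y$ and $\sigma(\eta Y)$. Once one verifies that the radius ordering $r(\sigma Y) < r(\sigma(\eta Y))$ is preserved (which will follow inductively from the theorem's own radius relation), we conclude $\eta(\sigma Y) = \sigma(\eta Y)$. Combined with the index equality, this intertwining reduces the whole theorem to verifying it at a single representative $Y_0 = M_0^Y$, the orbit-$0$ module of $Y$'s orbit. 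Indeed, the index equality forces $\eta Y_0$ to be orbit-$0$ in its own orbit too, so the claim at $Y_0$ amounts to $r_0^{\eta Y} = r_0^Y + 2$, $a_0^{\eta Y} = a_0^Y$, and $\eta Y_0$ being a sink module.

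For this base case, I would analyze $\eta Y_0$ explicitly via the AR-sequence structure in the $\mathbb{Z}A_{\infty}$ component: it is the indecomposable of quasi-length $\ql(Y_0) + 2$, whose quasi-socle and quasi-top are the $\tau$-shift and the $\tau^{-1}$-shift of those of $Y_0$, respectively. Its support $T(\eta Y_0)$ is obtained from $T(Y_0)$ by adjoining the supports of two additional quasi-simples, one at each end of the diameter path, and a direct geometric computation in $T(n)$ (using the bipartite orientation $\Omega$) should show that this extension is symmetric: it preserves the center and adds exactly $2$ to the radius, while keeping the diameter-path endpoints at sinks. The main obstacle is precisely this final geometric step: one must describe the supports of the two extension quasi-simples and track how $\tau$ acts on them via the kernel--cokernel construction of $\sigma$; this is the most technical ingredient of the proof.
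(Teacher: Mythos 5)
First, note that the paper does not prove this statement at all: it is quoted verbatim from Ringel \cite[Theorem 4]{Claus}, so there is no in-paper proof to compare your attempt against. Judging the proposal on its own terms, the reduction skeleton is reasonable and consistent with the tools available here (Theorem \ref{Theorem 3} does give $\iota(Y)=\iota(\eta Y)$, and Lemma \ref{crf} does let you transport AR-sequences along $\sigma$), but two genuine gaps remain. The first: Corollary \ref{iff} determines the type of a module from its index only relative to the parameter $b$ of \emph{its own} orbit, so to conclude that $Y$ and $\eta Y$ have the same type you must also prove $b(Y)=b(\eta Y)$ (equivalently $q(\eta Y)=q(Y)$, the source-end analogue of your base case). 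Your plan lists this among the things to be shown but never establishes it --- the sink-end base case controls $a_0$ and $r_0$ and says nothing about the length of the center path. The same missing fact is what you would need to make the intertwining step non-circular: to see that $\sigma$ preserves the radius ordering of the two middle-term summands, you need $\sigma Y$ and $\sigma(\eta Y)$ to change radius by the same amount, which by Propositions \ref{m sink}--\ref{3.3} requires them to have the same type, which is part of what is being proved.

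The second and more serious gap is the base case itself, which you defer to ``a direct geometric computation'': the claim that adjoining the supports of the two extra quasi-composition factors $\tau S$ and $\tau^{-1}S'$ extends $T(Y_0)$ symmetrically, preserving the center and adding exactly $2$ to the radius. This is not a routine verification; it is essentially the entire content of the theorem. There is no a priori reason that the support of $\tau S=\sigma^2 S$ sits ``at one end of the diameter path'' of $T(Y_0)$, and controlling where these supports land is precisely what Ringel's Lemma~3 and Lemma~3$^*$ (which the present paper invokes in the proof of Theorem \ref{Lemma 3.3} to relate $B(Y)$ to the ball of its quasi-top or quasi-socle) are designed to do. Without an argument of that kind, the proposal reduces the theorem to an unproved statement of comparable difficulty.
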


\begin{proposition} \label{m sink}\cite[3.1]{Claus}
 Let $M\in$ mod$(T(n),\Omega)$ be a sink module with center $c$ , then  
 
 $d(M)-2\leqslant d(\sigma M)\leqslant d(M)$.

   There are following three possibilities:
 \begin{enumerate}

  \item[(1)] $d(\sigma M)=d(M)-2$ if and only if $\sigma M$ is a sink module. In this case, the center of $\sigma M$ is $c$. 
   
\item[(2)] $d(\sigma M)=d(M)-1$ if and only if $\sigma M$ is a flow module. The center of $\sigma M$ is of the form $\{c, c^{'}\}$ with a path $(x_0, x_1, \cdots , x_{r})$, where $x_0=c, x_1=c^{'}$ such that $x_{r}$  is a source for $\sigma \Omega$.
   
 \item[(3)] $d(\sigma M)=d(M)$ if and only if $\sigma M$ is a source module. In this case, the center of $\sigma M$ is $c$ again.
   
\end{enumerate}     

\end{proposition}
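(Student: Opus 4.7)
The plan is to deduce the proposition from Theorem \ref{Theorem 1}, which already locates every regular indecomposable module inside a canonical $\sigma$-orbit. Since $M$ is a sink module with centre $c$ and radius $r = r(M)$ (I treat the indecomposable case first; the decomposable case follows componentwise because by definition all indecomposable summands share the centre $c$ and $\sigma$ is additive by Lemma \ref{comm}), Theorem \ref{Theorem 1} writes $M = M_{-i}$ for some integer $i \geq 0$ in the orbit of a minimal-radius sink module $M_0$ of radius $r_0 = r_0(M)$ with centre $a_0$, equipped with a distinguished path $(a_0, a_1, \ldots, a_b)$. Because $\sigma$ shifts the orbit index by $+1$, the shift $\sigma M$ equals $M_{-i+1}$, and I split the analysis into three cases according to where $-i+1$ lands.

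In Case (1), $i \geq 1$: Theorem \ref{Theorem 1}(1) identifies $\sigma M = M_{-(i-1)}$ as a sink module of radius $r_0 + (i-1) = r-1$ with centre $a_0 = c$, yielding $d(\sigma M) = 2(r-1) = d(M)-2$. In Case (2), $i = 0$ and $b \geq 1$: Theorem \ref{Theorem 1}(2) identifies $\sigma M = M_1$ as a flow module of radius $r_0 - 1$ with centre $\{a_0, a_1\} = \{c, c'\}$, so $d(\sigma M) = 2(r_0 - 1) + 1 = d(M) - 1$; the path $(a_0, a_1, \ldots, a_b)$ from Theorem \ref{Theorem 1} is precisely the one demanded in the proposition. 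In Case (3), $i = 0$ and $b = 0$: Theorem \ref{Theorem 1}(3) with parameter $0$ identifies $\sigma M = M_1 = M_{b+1}$ as a source module of radius $r_0$ with centre $a_0 = c$, giving $d(\sigma M) = 2r_0 = d(M)$.

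These three cases are mutually exclusive and exhaustive, and they yield three distinct values of $d(\sigma M)$, so both directions of every "if and only if" assertion follow immediately, as does the combined bound $d(M) - 2 \leq d(\sigma M) \leq d(M)$. The main obstacle is the orientation bookkeeping in Case (2): the proposition demands that the endpoint $a_b$ of the stated path be a source for $\sigma\Omega$, whereas Theorem \ref{Theorem 1}(3) only describes $a_b$ as the centre of the source module $M_{b+1}$, which lives in $\sigma^{b+1}\Omega$ rather than $\sigma\Omega$. I resolve this by tracking sinks and sources alternately along the bipartite path $(a_0, a_1, \ldots, a_b)$, starting from the known type of $a_0$ (sink of $\Omega$ iff $r_0$ is even, as recorded at the end of Theorem \ref{Theorem 1}) and using the parity of $b$ to identify the type of $a_b$ in $\sigma\Omega$ directly.
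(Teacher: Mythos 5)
The paper does not actually prove Proposition~\ref{m sink}: it is quoted verbatim from Ringel \cite[3.1]{Claus}, and in Ringel's paper the logical order is the reverse of yours --- Theorem~\ref{Theorem 1} is \emph{deduced from} Propositions 3.1--3.3, which analyse directly how $\sigma$ deforms the support tree $T(M)$. So your strategy is circular relative to the source, although it is formally admissible if Theorem~\ref{Theorem 1} is treated as a black box. Granting that, your three-way split ($i\ge 1$; $i=0$, $b\ge 1$; $i=0$, $b=0$) does correctly recover the three diameter values $d(M)-2$, $d(M)-1$, $d(M)$, their mutual exclusivity, and hence all the ``if and only if'' clauses together with the centre assertions in (1) and (3). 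Two side remarks: the proposition should be read for regular \emph{indecomposable} $M$ (the componentwise reduction you sketch fails in general, since two indecomposable sink summands with the same centre can have different indices $\iota$ and so land in different cases after applying $\sigma$), and Lemma~\ref{comm} concerns the push-down functor, not additivity of $\sigma$.

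The genuine gap is in case (2). The path $(x_0,\dots,x_r)$ demanded there has length $r=r(M)$: it starts at the centre $c=x_0$ of $M$, passes through the second centre vertex $c'=x_1$ of the flow module $\sigma M$, and ends at a vertex $x_r$ of $T(\sigma M)$ that is a source for $\sigma\Omega$ --- it is the long half of a diameter path of $\sigma M$ (which has $2r$ vertices and centre $\{x_0,x_1\}$). This is \emph{not} the centre path $(a_0,\dots,a_b)$ of Theorem~\ref{Theorem 1}: that path has length $b$, which in case (2) only satisfies $1\le b\le r_0=r$ and is typically much shorter than $r$, and it records how the centre migrates across the entire $\sigma$-orbit; its endpoint $a_b$ is the common centre of the source modules, not a vertex at distance $r$ from $c$. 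Your identification of the two paths is therefore false, and your proposed repair --- parity bookkeeping to decide whether $a_b$ is a source for $\sigma\Omega$ --- is aimed at the wrong vertex. The existence of a length-$r$ path in $T(\sigma M)$ ending in a source is a statement about the support of $\sigma M$, which Theorem~\ref{Theorem 1} does not describe at all; establishing it requires the direct comparison of $T(\sigma M)$ with $T(M)$ that Ringel carries out, and cannot be extracted from the orbit data alone.
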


\begin{proposition}\label{Flow}

                                                                   \cite[3.2]{Claus} Let $M\in$ mod$(T(n),\Omega)$ be a flow module . We assume that $(a_{0}, \cdots, a_{d})$ with $d=2r+1$ is a diameter path of $M$ with $a_0$ a sink and $a_d$ a source. Then
\begin{center}
$d(M)\leqslant d(\sigma M)\leqslant d(M)+1.$

\end{center}

There are the following two possibilities:
\begin{enumerate}

\item[(1)] $d(M)=d(\sigma M)$, thus $\sigma M$ is a flow module. Let $a_{d+1}$ be a neighbour of $a_d$ different from $a_{d-1}$. Then $(a_1, \cdots, a_{d+1})$ is a diameter path for $\sigma M$. In particular, the center of $\sigma M$ is $\{a_{r+1}, a_{r+2}\}$ (and we note that $a_1$ is a sink for $\sigma \Omega$ and $a_{d+1}$ is a source for $\sigma \Omega$).

\item[(2)] $d(\sigma M)=d(M)+1$, then $\sigma M$ is a source module and the center of $\sigma M$ is $a_{r+1}.$

\end{enumerate}
\end{proposition}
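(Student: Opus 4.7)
The plan is to read off the claim from the explicit description of the reflection functor: for $y\in T^+_n$, $(\sigma M)_y = M_y$, while for $y\in T^-_n$, $(\sigma M)_y = \ker\bigl(\bigoplus_{x\in \mathcal{N}(y)} M_x\to M_y\bigr)$. In particular, every source vertex of $\supp(\sigma M)$ already belongs to $\supp(M)$, and every ``new'' sink vertex of $\supp(\sigma M)$ must be adjacent to some source in $\supp(M)$.

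For the lower bound $d(\sigma M)\geq d(M)$, observe first that the sources $a_1,a_3,\ldots,a_d$ of the given diameter path remain in $\supp(\sigma M)$. Since $a_d$ is a leaf of $T(M)$, its neighbour $a_{d+1}\in T^-_n$ satisfies $M_{a_{d+1}}=0$, so $(\sigma M)_{a_{d+1}} = \bigoplus_{x\in\mathcal{N}(a_{d+1})} M_x \supseteq M_{a_d}\neq 0$, and hence $a_{d+1}\in \supp(\sigma M)$. By Lemma \ref{S1}, $\sigma M$ is regular indecomposable, so $T(\sigma M)$ is a connected subtree of $T(n)$; since it contains both $a_1$ and $a_{d+1}$, it contains the unique $T(n)$-path $(a_1,\ldots,a_{d+1})$ between them, giving $d(\sigma M)\geq d$.

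For the upper bound $d(\sigma M)\leq d(M)+1$, take a diameter path $(b_0,\ldots,b_{d'})$ of $T(\sigma M)$ and inspect its endpoints (which are leaves of $T(\sigma M)$). Three cases arise. (i) If $b_0,b_{d'}\in\supp(M)$, then the unique $T(n)$-path between them lies in $T(M)$, whence $d'\leq d$. (ii) If exactly one endpoint, say $b_0$, is a new sink, then its unique $T(\sigma M)$-neighbour $b_1$ is a source in $\supp(M)$, and the subpath from $b_1$ to $b_{d'}$ lies in $T(M)$, giving $d'-1\leq d$. (iii) If both endpoints are new sinks, then $b_1$ and $b_{d'-1}$ are sources in $\supp(M)$, and the subpath from $b_1$ to $b_{d'-1}$ lies in $T(M)$, so $d'-2\leq d$; moreover this subpath runs between two sources of $\Omega$ and hence has even length, so since $d=2r+1$ is odd we obtain $d'-2\leq d-1$, i.e., $d'\leq d+1$.

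The dichotomy then hinges on whether $a_0\in\supp(\sigma M)$. If $a_0\notin\supp(\sigma M)$, then $(a_1,\ldots,a_{d+1})$ is a diameter path of $T(\sigma M)$ with endpoints $a_1$ (a $\sigma\Omega$-sink) and $a_{d+1}$ (a $\sigma\Omega$-source), so $\sigma M$ is a flow module and the center of this length-$(2r+1)$ path is the edge $\{a_{r+1},a_{r+2}\}$. If $a_0\in\supp(\sigma M)$, then $(a_0,\ldots,a_{d+1})$ is a path of length $d+1$ saturating the upper bound, so $d(\sigma M)=d+1$, and both endpoints $a_0$ and $a_{d+1}$ are sinks of $\Omega$ (hence sources of $\sigma\Omega$), making $\sigma M$ a source module with center the middle vertex $a_{r+1}$ of the path. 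The main obstacle is case (iii) of the upper bound: the naïve estimate only yields $d'\leq d+2$, and the refinement to $d'\leq d+1$ crucially exploits that a source-to-source path in a tree has even length combined with $d(M)$ being odd (precisely the flow condition).
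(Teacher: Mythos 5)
The paper itself does not prove this proposition; it is quoted from Ringel \cite[3.2]{Claus}, so your argument has to stand on its own. The two bounds are handled correctly: the lower bound via the survival of $a_1$ and the appearance of the new sink $a_{d+1}$ next to the leaf $a_d$, and the upper bound via the three-way analysis of the endpoints of a diameter path of $T(\sigma M)$, where the parity refinement in case (iii) (a source-to-source path in $T(M)$ has even length while $d$ is odd) is precisely what rules out $d(\sigma M)=d+2$. That part is sound.

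The gap is in the dichotomy: you replace the condition ``$d(\sigma M)=d+1$'' by the condition ``$a_0\in\supp(\sigma M)$'', and these are not equivalent. Take $n=3$ and $M$ with $T(M)$ the path $(a_0,a_1,a_2,a_3)$, all $M_{a_i}=k$ and all structure maps nonzero; this is an indecomposable regular flow module with $d=3$, $r=1$. Here $(\sigma M)_{a_0}=\ker(M_{a_1}\to M_{a_0})=0$, so $a_0\notin\supp(\sigma M)$; yet any sink neighbour $y\neq a_0,a_2$ of $a_1$ and any sink neighbour $a_4\neq a_2$ of $a_3$ are new sinks of $\supp(\sigma M)$, and $(y,a_1,a_2,a_3,a_4)$ is a path of length $4=d+1$ in $T(\sigma M)$, so case (2) occurs even though $a_0$ has died. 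Consequently your treatment of case (2) --- which exhibits the specific diameter path $(a_0,\ldots,a_{d+1})$, reads off that its endpoints are $\Omega$-sinks, and locates the center at $a_{r+1}$ --- covers only a sub-case. To close the gap you must argue for an \emph{arbitrary} diameter path of length $d+1$: your own upper-bound analysis already yields that its endpoints are $\Omega$-sinks (in case (ii) because $(b_1,\ldots,b_{d+1})$ is then a diameter path of the flow module $M$ starting at a source, hence ending at a sink; in case (iii) because both endpoints are new sinks), but showing that its center is $a_{r+1}$ rather than $a_r$ requires an additional parity argument locating the path relative to the center edge $\{a_r,a_{r+1}\}$ of $T(M)$, which is missing. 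Case (1), by contrast, is fine once you condition on $d(\sigma M)=d$ instead of on $a_0\notin\supp(\sigma M)$: the path $(a_1,\ldots,a_{d+1})$ of length $d$ is then automatically a diameter path and the stated center and endpoint types follow.
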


\begin{proposition} \label{3.3}

\cite[3.3]{Claus} Let $M\in$ mod$(T(n),\Omega)$ be a source module with center $c$ and radius $r$. Then $\sigma M$ is a source module with center $c$ and radius $r+1.$
\end{proposition}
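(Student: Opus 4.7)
The plan is to compute $\sigma M$ directly from the explicit sink-reflection formula derived in the proof of Lemma~\ref{comm}, then read off the geometry of the support inside the tree $T(n)$, using the indecomposability of $\sigma M$ provided by Lemma~\ref{S1}. Since $M$ is a source module and $\Omega$ is bipartite, the two endpoints of any diameter path $(a_0,a_1,\dots,a_d)$ of $M$ lie in the same bipartition class, forcing $d=2r$, so that the center is a single vertex $c=a_r$.

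For the upper bound $T(\sigma M)\subseteq B_{r+1}(c)$, take any $y\in\supp(\sigma M)$. If $y$ is a source for $\Omega$, then $(\sigma M)_y=M_y\neq 0$, so $y\in\supp(M)\subseteq B_r(c)$. If $y$ is a sink for $\Omega$, then
\[
(\sigma M)_y \;=\; \ker\!\Bigl(\bigoplus_{x\to y}M_x\longrightarrow M_y\Bigr)
\]
being nonzero forces the domain to be nonzero, so some neighbour $x$ of $y$ satisfies $M_x\neq 0$; hence $d(y,c)\leq d(x,c)+1\leq r+1$. Since $B_{r+1}(c)$ is itself a subtree of $T(n)$, this gives $T(\sigma M)\subseteq B_{r+1}(c)$.

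For the matching lower bound, use that $T(n)$ is $n$-regular with $n\geq 3$: each source leaf $a_0,\,a_{2r}$ of $T(M)$ admits at least $n-1\geq 2$ sink neighbours lying outside $T(M)$, so pick such neighbours $b_{-1}$ of $a_0$ and $b_{2r+1}$ of $a_{2r}$, taken distinct when $r=0$. Since $M_{b_{-1}}=0$, the displayed formula yields $(\sigma M)_{b_{-1}}=\bigoplus_{x\to b_{-1}}M_x\supseteq M_{a_0}\neq 0$; analogously $(\sigma M)_{b_{2r+1}}\neq 0$. By Lemma~\ref{S1} the module $\sigma M$ is indecomposable, so $T(\sigma M)$ is a connected subtree of $T(n)$ and therefore contains the unique path $(b_{-1},a_0,a_1,\dots,a_{2r},b_{2r+1})$ of length $2r+2$ joining these two vertices.

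Combining the two bounds, this path is a diameter of $T(\sigma M)$, whence $r(\sigma M)=r+1$ and $C(\sigma M)=a_r=c$. Its endpoints $b_{-1}$ and $b_{2r+1}$ are sinks for $\Omega$, hence sources for $\sigma\Omega$, so $\sigma M$ is indeed a source module in $\modd(T(n),\sigma\Omega)$. I expect the upper bound to be the main obstacle: it requires both the correct kernel formula and the separate handling of source versus sink vertices in $\supp(\sigma M)$; once that is settled the lower bound is routine, using only $n\geq 3$ and the indecomposability coming from Lemma~\ref{S1}.
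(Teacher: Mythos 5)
The paper does not actually prove this proposition: it is quoted verbatim from Ringel \cite[3.3]{Claus}, so there is no in-paper argument to compare against, and your proposal supplies a proof where the paper offers only a citation. Your argument is correct. Because $\Omega$ is bipartite, $\sigma$ fixes the spaces at sources and replaces the space at a sink $y$ by $\ker\bigl(\bigoplus_{x\to y}M_x\to M_y\bigr)$; this yields the upper bound $\supp(\sigma M)\subseteq B_{r+1}(c)$ (using, implicitly but correctly, the standard fact that every vertex of a tree of radius $r$ lies within distance $r$ of its center, so $T(M)\subseteq B_r(c)$), while the $n$-regularity of $T(n)$ with $n\geq 3$ provides sink neighbours $b_{-1}$ of $a_0$ and $b_{2r+1}$ of $a_{2r}$ outside $T(M)$ at which $\sigma M$ is visibly nonzero; the resulting path of length $2r+2$ then pins down the diameter, the center $a_r=c$, and the source property for $\sigma\Omega$ all at once. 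This is essentially the support computation underlying Ringel's own treatment, so the route is the natural one rather than a genuinely different one. Two minor remarks: the appeal to Lemma~\ref{S1} is superfluous, since $T(\sigma M)$ is by definition the minimal subtree containing $\supp(\sigma M)$ and is therefore connected irrespective of indecomposability (and Lemma~\ref{S1} concerns regular modules, a hypothesis the proposition does not state); and for $r\geq 1$ the vertices $b_{-1}$ and $b_{2r+1}$ are automatically distinct, because a common neighbour of $a_0$ and $a_{2r}$ in a tree must lie on the path joining them and hence in $T(M)$, so the only case needing the explicit ``taken distinct'' clause is $r=0$, which you handle.
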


 Let $M\in \modd(T(n),\Omega)$ be an indecomposable module. Suppose that  $c$, or $\{c_1,c_2\}$, and $r$ are the center and radius of $M$, respectively. We define

\begin{center}
$B(M):=B_r(c)$, or $B(M):=B_r(c_1,c_2)$.
\end{center} 
Clearly, sets $T(M)$ and $B(M)$ have the same radius and same center. On the other hand, we can introduce a new set $B_{+1}(M)=\{x\in (T(n),\Omega)_0\mid d(x,B(M))\leq 1\}$.  Then $B_{+1}(M)$ is the ball having the same center with $B(M)$, and its radius $r_{+1}=r+1$, where $r$ is the radius of $B(M)$.  In particular,   these balls $B(M)$, $B_{+1}(M)$ can be  naturally seen as  subquivers of $(T(n),\Omega)$, whence we sometimes call them subquivers $B(M)$, $B_{+1}(M)$. Such subquivers  are finite.  

From now on, we always assume that $M$ is regular indecomposable when we talk about a module $M\in$ mod$(T(n),\Omega)$ in this section. Note that  $d(M)\geq 1$.  Let $1,\cdots, n_M$ be  all sink points in $B_{+1}(M)$, and let $1',\cdots, n'_M$ be  all source points in $B_{+1}(M)$, respectively,  $n_M, n'_M\in \mathbb{N}$.

\begin{lemma}\label{FK}
Let $M\in\modd (T(n),\Omega)$ be a regular indecomposable module. Then 
\begin{center}
$\sigma (M)=\cS^+_1\cdots \cS^+_{n_M}(M)$.
\end{center}

\end{lemma}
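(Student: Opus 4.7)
The plan is to reduce the functor $\sigma$, which by its definition in $(2.4)$ is the composition of the reflections $\cS^+_x$ over all sinks $x$ of $(T(n),\Omega)$, to the finite composition $\cS^+_1\cdots\cS^+_{n_M}$ by proving that each reflection at a sink $x\notin B_{+1}(M)$ acts as the identity on $M$. Because the individual $\cS^+_x$ at distinct sinks commute, I may freely regroup the composition and perform first the reflections at the outer sinks, then those at $1,\dots,n_M$; since $M$ is finite-dimensional, only finitely many of these reflections are nontrivial and so the whole composition is well-defined.

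The key step is the identity claim: if $x$ is a sink of $(T(n),\Omega)$ with $x\notin B_{+1}(M)$, then $\cS^+_x(M)=M$ as a representation. From $\supp(M)\subseteq T(M)\subseteq B(M)$ and $d(x,C(M))\geq r(M)+2$ one has $M_x=0$, and for every neighbour $y$ of $x$---necessarily a source of $\Omega$ by bipartiteness---the triangle inequality gives $d(y,C(M))\geq r(M)+1$, so $y\notin B(M)$ and $M_y=0$. The reflection $\cS^+_x$ replaces $M_x$ by $\ker\bigl(\bigoplus_{y\to x}M_y\to M_x\bigr)$ and installs the canonical composite maps under the reversed orientation at $x$; since both the kernel and all the $M_y$ are zero, the resulting representation equals $M$.

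Combining these, I write
\[
\sigma(M)=\Bigl(\prod_{x\notin B_{+1}(M)}\cS^+_x\Bigr)\bigl(\cS^+_1\cdots\cS^+_{n_M}(M)\bigr)
\]
and observe that the intermediate module $M':=\cS^+_1\cdots\cS^+_{n_M}(M)$ satisfies $M'_x=M_x=0$ for every outer sink $x$ (the inner reflections modify only the components at $1,\dots,n_M$, all inside $B_{+1}(M)$), and $M'_y=M_y=0$ for every neighbour $y$ of such an $x$ (such $y$ are sources of $\Omega$, hence untouched by the inner reflections). Applying the key step to $M'$ now collapses the outer product and yields $\sigma(M)=\cS^+_1\cdots\cS^+_{n_M}(M)$. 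The main subtlety---and the reason the statement uses $B_{+1}(M)$ rather than $B(M)$---is this one-vertex buffer: a sink at distance exactly $r(M)+1$ from $C(M)$ can have a source-neighbour inside $B(M)$ where $M$ is nonzero, so reflection there genuinely alters $M$, whereas the extra layer absorbs all such boundary interactions and the argument goes through mechanically.
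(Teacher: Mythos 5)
Your proposal is correct and follows essentially the same route as the paper: show that every reflection $\cS^+_x$ at a sink $x\notin B_{+1}(M)$ fixes $M$ because $M$ vanishes at $x$ and at all its neighbours, then use commutativity of reflections at distinct sinks to collapse $\sigma$ to the finite product $\cS^+_1\cdots\cS^+_{n_M}$. Your write-up merely makes explicit two details the paper leaves implicit (the well-definedness of the regrouped composition and the fact that the intermediate module is still zero at the outer sinks and their neighbours), so no further comment is needed.
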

\begin{proof}
Let  $x\in T(n)_0\setminus B_{+1}(M)$ be a sink point.  Then  $d(x,B(M))\geq 2$,  whence we have $y\nin$ supp$(M)$ for any arrow $\alpha: y \rightarrow x$. Then $\cN(x)\cap T(M)_0=\emptyset$, whence $\cS^+_x(M)=M$. On the other hand, reflection functors $\sigma_{x_1}$ and $\sigma_{x_2}$ commute for any two different sink points $x_1,x_2\in T(n)_0$. Hence we have $\sigma(M)=\cS^+_1\cdots \cS^+_{n_M}(M)$.   
\end{proof} 

Recall that the duality $D: \modd \cK_n\rightarrow\modd \cK_n; (M_i,M(\gamma_j))\mapsto (M^*_i, M^*(\gamma_j))$, where $M^*_i$ is the $k$-dual of $M_{3-i}$,  and $ M^*(\gamma_j)$ is the $k$-dual of $M(\gamma_j)$.  Then there exists a duality $D_{T(n)}: \modd(T(n),\Omega)\rightarrow\modd(T(n),\Omega)$, and we have $\pi_\lambda \circ D_{T(n)}(M)\cong D\circ \pi_\lambda(M)$ for all $M\in$ mod$(T(n),\Omega)$ \cite[Section 7.2]{Daniel}.  Using the duality $D_{T(n)}$, we can see that functor $D_{T(n)}^2$ is equivalent to the identity functor on the category  mod$(T(n),\Omega)$.

Since Lemma \ref{FK} tells us that we can take regular indecomposable module $M$ as a representation of subquiver $B_{+1}(M)$, we have $\sigma^-(D_{T(n)}M)= \cS^-_{1}\cdots \cS^-_{n_M}(D_{T(n)}M)$  by duality.  Then  we have the following result.

\begin{Cor}
Let $M\in\modd (T(n),\Omega)$ be a regular indecomposable module. Then 
\begin{center}
$\sigma^- (M)=\cS^-_{1'}\cdots \cS^-_{n'_M}(M)$.
\end{center}

\end{Cor}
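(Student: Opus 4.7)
The plan is to reduce the statement to Lemma \ref{FK} via the duality $D_{T(n)}$ introduced in the paragraph immediately preceding the corollary. First I would set $N := D_{T(n)}(M)$; since $D_{T(n)}$ satisfies $\pi_\lambda\circ D_{T(n)}\cong D\circ \pi_\lambda$ and restricts to a duality on the full subcategory of regular indecomposable modules (by Lemma \ref{S1} combined with the analogous fact for $D$ on $\modd \cK_n$), the module $N$ is again regular indecomposable. Applying Lemma \ref{FK} to $N$ then gives
$$\sigma(N) \;=\; \cS^+_{s_1}\cdots \cS^+_{s_k}(N),$$
where $s_1,\dots,s_k$ enumerate the sink vertices of $B_{+1}(N)$.

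The key step is the standard compatibility between the $k$-linear duality and the BGP reflection functors, namely
$$D_{T(n)}\circ \cS^+_x \;\cong\; \cS^-_{x^\vee}\circ D_{T(n)} \qquad \text{and hence} \qquad D_{T(n)}\circ \sigma \;\cong\; \sigma^-\circ D_{T(n)},$$
where $x\mapsto x^\vee$ denotes the involution of $T(n)$ underlying $D_{T(n)}$, which interchanges the two bipartition classes of sinks and sources of $(T(n),\Omega)$. These intertwiners can be obtained either by unpacking the definition of $D_{T(n)}$ from \cite[Section 7.2]{Daniel} as dualization of the underlying vector spaces composed with this involution, or by transporting the corresponding identities on $\cK_n$ across $\pi_\lambda$ with the help of Lemma \ref{comm}.

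Applying $D_{T(n)}$ to both sides of $\sigma(N)=\cS^+_{s_1}\cdots \cS^+_{s_k}(N)$, using the intertwiners above and the equivalence $D_{T(n)}^2\cong \id$, I obtain
$$\sigma^-(M) \;\cong\; \cS^-_{s_1^\vee}\cdots \cS^-_{s_k^\vee}(M).$$
To finish, I would check that the set $\{s_1^\vee,\dots, s_k^\vee\}$ coincides with the set $\{1',\dots,n'_M\}$ of source vertices of $B_{+1}(M)$; this is immediate because the involution $x\mapsto x^\vee$ carries $B_{+1}(N)$ bijectively onto $B_{+1}(M)$ while interchanging its sinks and sources.

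The main obstacle is making the duality–reflection intertwiners $D_{T(n)}\circ \cS^+_x\cong \cS^-_{x^\vee}\circ D_{T(n)}$ completely rigorous at the level of the covering $T(n)$; once this is established, the corollary follows by routine bookkeeping from Lemma \ref{FK} and the identity $D_{T(n)}^2\cong \id$.
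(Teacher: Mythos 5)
Your proposal is correct and takes essentially the same route as the paper: the paper's entire proof is ``this is a dual version of Lemma \ref{FK}'', with the preceding paragraph supplying exactly the ingredients you use, namely the duality $D_{T(n)}$, the fact that $D_{T(n)}^2$ is equivalent to the identity, and the transfer of Lemma \ref{FK} across this duality. Your write-up merely makes explicit the vertex involution and the intertwiner $D_{T(n)}\circ \cS^+_x\cong \cS^-_{x^\vee}\circ D_{T(n)}$ that the paper leaves implicit.
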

\begin{proof}
This is a dual version of Lemma \ref{FK}.
\end{proof}

\begin{lemma}\label{FE}
Let $M\in \modd(T(n),\Omega)$ be a regular indecomposable module. Then
\begin{center}

$\sigma^-\sigma (M)=\cS^-_{n_M}\cdots \cS^-_1 \cS^+_1 \cdots \cS^+_{n_M}(M)\cong M$.
\end{center}
\end{lemma}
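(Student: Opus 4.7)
The plan is to combine Lemma~\ref{FK} with the classical Bernstein--Gelfand--Ponomarev identity $\cS^{-}_{a}\cS^{+}_{a}\cong \mathrm{id}$ on regular indecomposables, after first pinning down which source-reflections of $\sigma\Omega$ act non-trivially on $\sigma M$.

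First, I would use Lemma~\ref{FK} to write $\sigma M=\cS^{+}_{1}\cdots\cS^{+}_{n_M}(M)$, where $1,\ldots,n_M$ are the sinks of $\Omega$ in $B_{+1}(M)$; these vertices all become sources of the flipped orientation $\sigma\Omega$. The next step is to show that no other source of $\sigma\Omega$ contributes to $\sigma^{-}(\sigma M)$. Let $x$ be a source of $\sigma\Omega$ with $x\notin\{1,\ldots,n_M\}$. Then $x$ is a sink of $\Omega$ outside $B_{+1}(M)$, so $d(x,C(M))\geq r(M)+2$, and both $x$ and every neighbour of $x$ lie outside $T(M)\subseteq B(M)$. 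Since each $\cS^{+}_{i}$ only alters the vector space at vertex $i$, we obtain $(\sigma M)_{x}=M_{x}=0$ and $(\sigma M)_{y}=M_{y}=0$ for every neighbour $y$ of $x$; the dual of the argument in the proof of Lemma~\ref{FK} then forces $\cS^{-}_{x}(\sigma M)=\sigma M$. Reflections at distinct sources of a bipartite orientation commute, so this yields
\[
\sigma^{-}(\sigma M)=\cS^{-}_{n_M}\cdots\cS^{-}_{1}\,\cS^{+}_{1}\cdots\cS^{+}_{n_M}(M),
\]
which is already the equality asserted in the lemma.

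To deduce the final isomorphism with $M$, I would telescope the right-hand side by pairing $\cS^{-}_{i}$ with $\cS^{+}_{i}$ from the inside out. None of the reflections $\cS^{+}_{2},\ldots,\cS^{+}_{n_M}$ modifies an arrow incident to vertex $1$, so vertex $1$ remains a sink of the orientation underlying $M':=\cS^{+}_{2}\cdots\cS^{+}_{n_M}(M)$. Moreover, $M'$ is regular indecomposable because a single reflection at a sink restricts to an equivalence on the subcategory of regular modules, which is exactly the ingredient used to prove Lemma~\ref{S1}. Therefore $\cS^{-}_{1}\cS^{+}_{1}M'\cong M'$ by the classical identity, and iterating the same pairing for $i=2,\ldots,n_M$ collapses the whole product down to $M$.

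The hard part is the first step: Propositions~\ref{m sink}--\ref{3.3} tell us that $B_{+1}(\sigma M)$ and $B_{+1}(M)$ may genuinely differ, so one cannot simply invoke the Corollary preceding this lemma with $M$ replaced by $\sigma M$. The way around this is to test source-reflections of $\sigma\Omega$ against $B_{+1}(M)$ instead of $B_{+1}(\sigma M)$, using the radius estimate $d(x,C(M))\geq r(M)+2$ to guarantee the vanishing of $\sigma M$ at $x$ and at each of its neighbours.
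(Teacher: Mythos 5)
Your proof is correct and follows essentially the same route as the paper: both arguments show that every source-reflection of $\sigma\Omega$ at a vertex outside $\{1,\dots,n_M\}$ acts trivially on $\sigma M$, because $\sigma M$ vanishes at such a vertex and at all of its neighbours, and then conclude via the classical identity $\cS^-_a\cS^+_a\cong \mathrm{id}$. The only cosmetic difference is that the paper deduces $x\notin\supp(\sigma M)$ from the indecomposability of $\sigma M$ (Lemma \ref{S1}), whereas you observe directly that the reflections $\cS^+_1,\dots,\cS^+_{n_M}$ do not alter the vector space at $x$; both are valid.
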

\begin{proof}
 Let $x\in (T(n),\Omega)_0\setminus B_{+1}(M)$ be a sink point with $y\in \cN(x)$, that is,  $x$ is a source point for $\sigma\Omega$.  Then $y$ is a sink point for $\sigma\Omega$. Moreover, the  point $y$ either satisfies  $\{y\}\cap B_{+1}(M)=\emptyset$ or $y$ is a leaf of $B_{+1}(M)$ by the choice of $x$.  Then we have  $\{y\}$ $ \cap$ supp$(M)=\emptyset$. Hence $(\sigma M)_y=M_y=0$. It indicates that $y\nin $ supp$(\sigma M)$. We have 
\begin{center}
 $\cN(x)$ $\cap $ supp$(\sigma M)=\emptyset$.
\end{center} 
Since $\sigma M$ is indecomposable by Lemma \ref{S1}, we have $x\nin$ supp$(\sigma M)$. It is obvious that $x\nin$ supp$(\sigma M)$ when $x$ is source point for $\Omega$. Then we have supp$(\sigma M)\subseteq B_{+1}(M)$ as sets of points, and $\cS^-_x(\sigma M)=\sigma M$ for any $x\in (T(n),\Omega)_0\setminus B_{+1}(M)$ being a sink point. Finally, we can see that $\sigma^-$ is the composition of those source points that contained in $B_{+1}(M)$ for $\sigma \Omega$, and they are exactly the points $1,\cdots,n_M$, whence we get our claim.

\end{proof}

Note that we can also get $B(\sigma M)\subseteq B_{+1}(M)$ as unoriented graphs from supp$(\sigma M)\subseteq B_{+1}(M)$  in the proof of  Lemma \ref{FE}.  Similarly,   we can get the following result.

\begin{Cor}\label{CI}
Let $M\in\modd (T(n),\Omega)$ be a regular indecomposable module. Then
\begin{center}
$\sigma \sigma^-(M)=\cS^+_{n'_M}\cdots \cS^+_{1'} \cS^-_{1'}\cdots \cS^-_{n'_M}(M)\cong M$.
\end{center}
\end{Cor}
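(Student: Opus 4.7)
The plan is to mirror the proof of Lemma \ref{FE} with the roles of sinks and sources (equivalently, of $\sigma$ and $\sigma^-$) exchanged. By the previous Corollary I already have $\sigma^- M = \cS^-_{1'}\cdots\cS^-_{n'_M}(M)$, so the task splits into three steps: establishing $\supp(\sigma^- M)\subseteq B_{+1}(M)$; using this to reduce $\sigma(\sigma^-M)$ to reflections only at $1',\dots,n'_M$; and finally identifying the resulting composition with $M$.

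For the support inclusion, I take $x$ a source of $\Omega$ outside $B_{+1}(M)$ and any $y\in\cN(x)$. Then $y$ is a sink of $\Omega$, hence untouched by $\sigma^-$, so $(\sigma^- M)_y = M_y$. Since $x\notin B_{+1}(M)$ forces $d(y,B(M))\geq 1$, I get $y\notin\supp(M)$ and thus $(\sigma^- M)_y = 0$. Therefore $\cN(x)\cap\supp(\sigma^- M)=\emptyset$, and indecomposability of $\sigma^- M$ (Lemma \ref{S1}) forces $x\notin\supp(\sigma^- M)$; together with the trivial case of non-reflected vertices, this yields $\supp(\sigma^-M)\subseteq B_{+1}(M)$. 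The reduction then follows: the sinks of $\sigma^-\Omega$ are precisely the sources of $\Omega$; those outside $B_{+1}(M)$ act trivially on $\sigma^-M$ by the support inclusion, while those inside are by definition $1',\dots,n'_M$. Hence
\[
\sigma\sigma^-(M) \;=\; \cS^+_{n'_M}\cdots\cS^+_{1'}\,\cS^-_{1'}\cdots\cS^-_{n'_M}(M).
\]

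For the identification with $M$, I will peel off reflection pairs inductively. Since any two distinct sources of the bipartite $\Omega$ are non-adjacent, reflections at them commute in any intermediate orientation where both are defined. I will commute $\cS^+_{1'}$ inward until it sits next to $\cS^-_{1'}$, apply the BGP equivalence recalled in (2.3) yielding $\cS^+_{1'}\cS^-_{1'}\cong\id$ on regular indecomposables, and then repeat the procedure for $2',\dots,n'_M$, each time invoking Lemma \ref{S1} to guarantee that the intermediate module remains regular indecomposable so that (2.3) stays applicable.

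The main obstacle I anticipate is the bookkeeping in this last step: tracking the orientation after each partial composition and verifying that both reflections in each pair are applicable at the correct intermediate stage. A cleaner alternative, should the direct peeling become unwieldy, is to apply Lemma \ref{FE} to $D_{T(n)}M$ and transport the conclusion back via $D_{T(n)}^2\cong\id$ together with the natural compatibility of $D_{T(n)}$ with the reflection functors.
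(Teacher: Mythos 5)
Your proposal is correct and follows the paper's own route: the paper proves this corollary precisely as the dual of Lemma \ref{FE}, establishing $\supp(\sigma^- M)\subseteq B_{+1}(M)$ by the same neighbour argument at sources of $\Omega$ outside $B_{+1}(M)$ and then identifying the sinks of $\sigma\Omega$ acting nontrivially with $1',\dots,n'_M$. One small note: for the final cancellation the regularity and indecomposability of the intermediate modules should be drawn from the single-vertex reflection statement recalled in (2.3) rather than from Lemma \ref{S1}, which only concerns the full functors $\sigma$ and $\sigma^-$; also, in the composition $\cS^+_{n'_M}\cdots\cS^+_{1'}\cS^-_{1'}\cdots\cS^-_{n'_M}$ each pair $\cS^+_{j'}\cS^-_{j'}$ already sits adjacently once the inner pairs are cancelled, so no commuting is actually needed.
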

\begin{proof}
This is a dual version of Lemma \ref{FE}.  That is, we can change $\sigma M$ into $\sigma^- M$ in the proof of  Lemma \ref{FE} and consider  $x\in  T(n)_0 \setminus B_{+1}(M)$ being a source point for $\Omega$, then $x$ is a sink point for $\sigma\Omega$. Comparing supp$(\sigma^- M)$ with $B_{+1}(M)$, we can finally get $\cN(x)$ $\cap$ supp$(\sigma^-M)=\emptyset$ and $x\nin$ supp$(\sigma^-M)$. Then supp$(\sigma^- M)\subseteq B_{+1}(M)$ and $\cS^+_x(\sigma^- M)=\sigma^- M$. Hence $\sigma$ is the composition of those sink points that contained in $B_{+1}(M)$ for $\sigma \Omega$, and they are exactly the points $1',\cdots,n'_M$, whence we get this. 

\end{proof}

 We also have  $B(\sigma^- M)\subseteq B_{+1}(M)$  from the proof of Corollary \ref{CI}.   According to Lemma \ref{FE} and Corollary \ref{CI},  we can take

\begin{center}
$M, \sigma M, \sigma^- M, \sigma^-\sigma (M)$ and $\sigma \sigma^-(M)$
\end{center}
as  representations of subquivers $(B_{+1}(M),\Omega)$  and $(B_{+1}(M),\sigma\Omega)$, respectively.

\begin{lemma}\label{S2}

There is a natural isomorphism between the functor
  $\sigma^- \circ \sigma$  and the identity functor. 

\end{lemma}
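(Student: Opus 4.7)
The plan is to promote the object-level isomorphism $\sigma^-\sigma(M)\cong M$ of Lemma \ref{FE} to a genuine natural isomorphism of endofunctors on the subcategory of regular modules, by passing through the single-vertex reflection functors $\cS^+_a$ and $\cS^-_a$, which by (2.3) are quasi-inverse equivalences between the regular subcategories.

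First I record that for every sink $a$, the pair $(\cS^+_a,\cS^-_a)$ gives quasi-inverse equivalences on the regular subcategories, hence carries a canonical natural isomorphism $\eta_a\colon \cS^-_a\circ\cS^+_a\Rightarrow \mathrm{id}$. Given a regular indecomposable module $M$, Lemma \ref{FK} writes $\sigma(M)=\cS^+_{1}\cdots\cS^+_{n_M}(M)$, where $1,\dots,n_M$ are the sinks in $B_{+1}(M)$, and the support analysis in Lemma \ref{FE} gives $\supp(\sigma M)\subseteq B_{+1}(M)$, whence $\sigma^-(\sigma M)=\cS^-_{n_M}\cdots\cS^-_1(\sigma M)$. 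Composing the $\eta_i$'s (permissible because reflections at distinct sinks commute) produces an isomorphism $\varepsilon_M\colon \sigma^-\sigma(M)\xrightarrow{\;\sim\;} M$, and one extends $\varepsilon$ additively to all regular modules via direct-sum decomposition.

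For naturality, given a morphism $f\colon M\to N$ between regular modules I would choose a finite set $S$ of sinks containing those in $B_{+1}(M)\cup B_{+1}(N)$. By the support arguments of Lemmas \ref{FK} and \ref{FE}, every reflection at a sink outside $S$ acts trivially on $M$, $N$, $\sigma M$, and $\sigma N$, so both $\sigma$ and $\sigma^-\sigma$ applied to $f$ coincide with the \emph{finite} composition indexed by $S$. That composition is itself a composition of $k$-linear equivalences, whose unit is the pointwise product of the $\eta_a$'s; this product restricts to $\varepsilon_M$ and $\varepsilon_N$ on objects, and naturality of $\varepsilon$ at $f$ then reduces to the naturality of each individual $\eta_a$.

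The hard part will be verifying that $\varepsilon_M$ is independent of the chosen finite index set of sinks, so that the assembly is actually functorial rather than ad hoc. Concretely, one must check that for each sink $a$ outside $B_{+1}(M)$, both $\cS^+_a$ and $\cS^-_a$ restrict to the identity on $M$ and on every intermediate module appearing in the reflection sequence (whose support stays inside $B_{+1}(M)$ by the proof of Lemma \ref{FE}), and that under this restriction $\eta_a$ becomes the identity isomorphism. Once this bookkeeping is complete, the canonical assembly of the $\eta_a$'s yields the required natural isomorphism $\sigma^-\circ\sigma\Rightarrow \mathrm{id}$ on the regular subcategory of $\modd(T(n),\Omega)$.
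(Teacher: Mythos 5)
Your proposal follows the same basic reduction as the paper's proof: restrict to a finite ball containing the supports involved, so that $\sigma$ and $\sigma^-$ become finite compositions of the classical single-vertex reflection functors $\cS^+_a$, $\cS^-_a$, and then invoke the equivalence properties of those functors. Where you genuinely diverge is in how naturality is established. The paper takes the object-level isomorphism $\theta_M\colon M\to\sigma^-\sigma(M)$ from Lemma \ref{FE}, passes to the common ball $B_{+1}(M,N)$ for a morphism $f\colon M\to N$, notes $\Hom_k(M,N)\cong\Hom_k(F(M),F(N))$, and then asserts that ``there exists such $F(f)$'' making the square commute --- which, read literally, only produces \emph{some} morphism completing the square, not the morphism $F(f)$ determined by the functor. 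You instead assemble the isomorphism from the canonical units $\eta_a\colon\cS^-_a\cS^+_a\Rightarrow \mathrm{id}$ of the individual single-vertex reflection equivalences and reduce naturality at $f$ to the naturality of each $\eta_a$; this is the more canonical route and actually supplies the step the paper glosses over. The point you flag as the ``hard part'' --- independence of the chosen finite index set --- is real but routine: for a sink $a$ with $d(a,B(M))\geq 2$ one has $M_a=0$ and $M_y=0$ for every neighbour $y$ of $a$, so $\cS^+_a$ fixes $M$ together with all its structure maps and the unit $\eta_a$ restricts to the identity there, exactly as in the support analysis of Lemmas \ref{FK} and \ref{FE}; the same applies to the intermediate modules since their supports stay inside $B_{+1}(M)$. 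So your argument is correct and, if anything, tightens the paper's own proof.
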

\begin{proof}
Let $1_{T(n)}$ denote the identity functor on the full subcategory of  $\modd(T(n),\Omega)$ consisting of regular modules. By Lemma \ref{S1}, we only need to talk about regular indecomposable modules.

Let $M\in$ mod$(T(n),\Omega)$ be a regular indecomposable module.  We now consider $M$ as a representation of subquiver $B_{+1}(M)$. According to Lemma \ref{FE},  there exists an isomorphism $\theta_M: M \rightarrow \sigma^-\sigma(M)$. Let $N\in\modd(T(n),\Omega)$ be regular indecomposable, and let $f:M\rightarrow N$ be a morphism. Let $B_{+1}(M,N)$ be the minimal ball containing $B_{+1}(M)$ and $B_{+1}(N)$. Then $B_{+1}(M,N)\subseteq T(n)_0$ is  finite.  It allows us to take $M,N$ as representations of subquiver $B_{+1}(M,N)$.
Let $F=\sigma^-\circ \sigma,$ and let $1,\cdots, n_{M,N}$ be the sink points of subquiver $B_{+1}(M,N)$. Consider the following diagram
\begin{center}
\begin{tikzcd}
M \arrow{r}{\theta_M} \arrow{d}{f}
& F(M)\arrow{d}{F(f)}\\
N \arrow{r}{\theta_N} & F(N)
\end{tikzcd}
\end{center}
Note that $B_{+1}(M), B_{+1}(N)\subseteq B_{+1}(M,N)$. By the proof of Lemma \ref{FE},   it is not hard to see that 

\begin{center}
 $F(X)\cong\cS^-_{n_{M,N}}\cdots \cS^-_1\cS^+_1 \cdots \cS^+_{n_{M,N}}(X)\cong X,$ 
\end{center}
where $X\in \{M,N\}$.  Moreover, we have 
\begin{center}
$\Hom_k(M,N)\cong \Hom_k(F(M),F(N))$.
\end{center}
Then there exists such $F(f)\in\Hom_k(F(M),F(N))$ so  that the above diagram commutes. Hence there exists a natural isomorphism $\theta: 1_{T(n)}\rightarrow F$. 
\end{proof}

\begin{Cor}\label{SN}
There is a natural isomorphism between the functor
  $\sigma \circ \sigma^-$  and the identity functor. 

\end{Cor}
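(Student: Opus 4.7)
The plan is to mirror the argument of Lemma \ref{S2} verbatim, swapping the roles of sinks/sources and of $\sigma/\sigma^-$, and using Corollary \ref{CI} in place of Lemma \ref{FE}. By Lemma \ref{S1}, both $\sigma$ and $\sigma^-$ preserve the full subcategory of regular indecomposable modules, so it suffices to construct a natural isomorphism on this subcategory and then extend by direct sums.

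First, I would fix a regular indecomposable $M\in\modd(T(n),\Omega)$. Corollary \ref{CI} already supplies an isomorphism $\theta_M:M\rightarrow \sigma\sigma^-(M)$, obtained by realising $M$ as a representation of the finite subquiver $B_{+1}(M)$ and applying the composition $\cS^+_{n'_M}\cdots \cS^+_{1'}\cS^-_{1'}\cdots \cS^-_{n'_M}$, which recovers $M$ up to canonical isomorphism thanks to the classical fact \cite[\Romannum{7}.5.7]{Assem1} that $\cS^+_a\cS^-_a$ is naturally isomorphic to the identity on regular modules.

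Next, for a morphism $f:M\rightarrow N$ between regular indecomposables, I would introduce the minimal ball $B_{+1}(M,N)$ containing both $B_{+1}(M)$ and $B_{+1}(N)$. Since this ball is finite and contains the supports of $M$, $N$, $\sigma^- M$, $\sigma^- N$, $\sigma\sigma^- M$, $\sigma\sigma^- N$ (the last two by the support computation in Corollary \ref{CI}), I can treat all these modules as representations of the finite bipartite subquiver $B_{+1}(M,N)$. Letting $1',\dots,n'_{M,N}$ be the source vertices of this subquiver with respect to $\Omega$, I set $G:=\sigma\circ\sigma^-$ and note that $G(X)\cong \cS^+_{n'_{M,N}}\cdots \cS^+_{1'}\cS^-_{1'}\cdots \cS^-_{n'_{M,N}}(X)\cong X$ for $X\in\{M,N\}$. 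The classical reflection-functor equivalence then yields a bijection
\begin{center}
$\Hom_k(M,N)\cong \Hom_k(G(M),G(N))$,
\end{center}
so there is a unique $G(f)$ making the square
\begin{center}
\begin{tikzcd}
M \arrow{r}{\theta_M} \arrow{d}{f} & G(M)\arrow{d}{G(f)}\\
N \arrow{r}{\theta_N} & G(N)
\end{tikzcd}
\end{center}
commute. This gives the required natural isomorphism $\theta:1_{T(n)}\rightarrow G=\sigma\circ\sigma^-$.

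The only subtle point—and the place where I would slow down—is verifying that the compositions $\cS^+_{n'_{M,N}}\cdots \cS^-_{n'_{M,N}}$ computed with respect to the larger subquiver $B_{+1}(M,N)$ agree with those computed individually with respect to $B_{+1}(M)$ and $B_{+1}(N)$. This reduces to the observation that, exactly as in the proof of Lemma \ref{FE}, source vertices lying in $B_{+1}(M,N)\setminus B_{+1}(M)$ have no neighbours in $\supp(M)$ (and similarly for $N$), so the extra reflections act as the identity on $M$ and $N$. Once this compatibility is recorded, independence of the order of the $\cS^\pm_{-}$ at distinct source vertices (all pairwise commuting) finishes the verification, and the statement follows as a genuine dual of Lemma \ref{S2}.
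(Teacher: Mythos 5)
Your proposal is correct and takes essentially the same approach as the paper: the paper's own proof of this corollary consists precisely of the remark that one dualizes the proof of Lemma \ref{S2}, using Corollary \ref{CI} in place of Lemma \ref{FE}, which is exactly what you carry out. You simply make explicit the details (the common ball $B_{+1}(M,N)$, the induced map on morphisms, and the compatibility of the extra reflections at source vertices outside the smaller balls) that the paper leaves to the reader.
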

\begin{proof}
 By Lemma \ref{S1}, we only need to talk about regular indecomposable modules.  Using Corollary \ref{CI}, we can finally get a dual proof with that of the Lemma \ref{S2}.

\end{proof}

\begin{lemma} \label{crf}
The functors  $\sigma$  and $ \sigma^-$ are quasi-inverses and induce    equivalences of the $k$-linear full subcategories of $\modd (T(n),\Omega)$ and   $\modd(T(n),\sigma\Omega)$ consisting of  regular modules.
\end{lemma}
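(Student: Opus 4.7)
The plan is to deduce this as a near-immediate consequence of the three preceding results: Lemma \ref{S1}, Lemma \ref{S2}, and Corollary \ref{SN}. Concretely, I would first observe that the reflection functors $\sigma$ and $\sigma^-$ take regular modules to regular modules, and then invoke the two natural isomorphisms $\sigma^-\circ\sigma\cong 1$ and $\sigma\circ\sigma^-\cong 1$ to realize them as mutually quasi-inverse functors.

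For the first step, Lemma \ref{S1} tells us that $\sigma$ and $\sigma^-$ preserve regular \emph{indecomposable} modules. Both functors are additive, being defined as compositions of the classical BGP-type reflection functors $\cS^+_x$ and $\cS^-_x$, which are additive. Since a (possibly decomposable) module is regular precisely when each of its indecomposable summands is regular, additivity extends the preservation statement: restrictions of $\sigma$ and $\sigma^-$ yield well-defined functors between the full subcategories of regular modules in $\modd(T(n),\Omega)$ and $\modd(T(n),\sigma\Omega)$. I would state this restriction explicitly at the start of the proof, since the lemma is about these full subcategories.

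The second step assembles the natural isomorphisms. By Lemma \ref{S2}, on the regular subcategory of $\modd(T(n),\Omega)$ there is a natural isomorphism $\theta\colon 1\xrightarrow{\sim}\sigma^-\circ\sigma$, and Corollary \ref{SN} provides the symmetric natural isomorphism $1\xrightarrow{\sim}\sigma\circ\sigma^-$ on the regular subcategory of $\modd(T(n),\sigma\Omega)$. These are precisely the two pieces of data required by the standard characterization of an equivalence of categories, so $\sigma$ and $\sigma^-$ are quasi-inverse and thereby induce the claimed equivalence. Again, although Lemma \ref{S2} and Corollary \ref{SN} are phrased for indecomposables, additivity of all functors involved transports both the isomorphisms and their naturality to arbitrary regular modules.

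Since nearly all of the content has been done in the previous lemmas, there is no genuine obstacle here; the only point needing a line of justification is the passage from indecomposable to decomposable regular modules, which is handled by additivity. The proof therefore reduces to invoking Lemma \ref{S1}, Lemma \ref{S2} and Corollary \ref{SN} and citing the criterion that a functor admitting a quasi-inverse is an equivalence.
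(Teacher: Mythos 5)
Your proposal is correct and follows essentially the same route as the paper, which likewise deduces the lemma directly from Lemma \ref{S2} and Corollary \ref{SN}; your added remark on extending from indecomposables to all regular modules by additivity is a reasonable (and slightly more careful) elaboration of a point the paper leaves implicit.
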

\begin{proof}
Combining  Lemma \ref{S2} and Corollary \ref{SN}, the  functors $\sigma\circ \sigma^-$ and $\sigma^-\circ \sigma$ are equivalent to the identity functor on regular modules of mod$(T(n),\Omega)$, that is, $\sigma^-$, $\sigma$ are the inverse functors of $\sigma$, $\sigma^-$, respectively. Hence we have the claim. 

\end{proof}

\begin{lemma}\label{Tc}
 Let $M\in\modd (T(n),\Omega)$ be a regular indecomposable module. Then  we have $D_{T(n)}\circ \sigma(M)\cong \sigma^{-1}\circ D_{T(n)}(M)$ and $D_{T(n)}\circ \sigma^-(M)\cong \sigma\circ D_{T(n)}(M)$. 
\end{lemma}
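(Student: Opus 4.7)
My plan is to establish the first isomorphism $D_{T(n)}\circ \sigma(M)\cong \sigma^{-1}\circ D_{T(n)}(M)$ directly, and then deduce the second one by applying $D_{T(n)}$ to both sides and using $D_{T(n)}^{2}\cong \id$ together with Lemma \ref{crf}, which lets me identify $\sigma^{-1}$ with $\sigma^{-}$ on regular modules. I will work with the duality $D_{T(n)}$ as the composition of the standard $k$-dual with an involution $\iota$ of $T(n)$ that covers the swap $1\leftrightarrow 2$ on $K(n)$ used in the definition of $D$; this $\iota$ exchanges the sinks and sources of $(T(n),\Omega)$ and is precisely what allows $D_{T(n)}$ to land in $\modd(T(n),\Omega)$ rather than $\modd(T(n),\Omega^{op})$.

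The main strategy is to use the explicit presentations from Lemma \ref{FK} and its dual corollary: $\sigma(M)=\cS^{+}_{1}\cdots \cS^{+}_{n_{M}}(M)$, where $1,\dots,n_{M}$ are the sinks of the finite subquiver $B_{+1}(M)$, and $\sigma^{-}(N)=\cS^{-}_{1'}\cdots \cS^{-}_{n'_{N}}(N)$, where $1',\dots,n'_{N}$ are the sources of $B_{+1}(N)$. For each fixed sink $x$ of $(T(n),\Omega)$ I would first verify the single-vertex identity
\[
D_{T(n)}\circ \cS^{+}_{x}\cong \cS^{-}_{\iota(x)}\circ D_{T(n)},
\]
which is the standard BGP self-duality: the kernel of $\bigoplus_{y\in \cN(x)}M_{y}\to M_{x}$ dualises to the cokernel of $M^{*}_{x}\to \bigoplus_{y\in \cN(x)}M^{*}_{y}$, and after relabelling vertices by $\iota$ this is exactly the formula defining $\cS^{-}_{\iota(x)}$ applied to $D_{T(n)}M$. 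Since reflections at distinct sinks commute, the order in which I apply these commutations is immaterial.

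Iterating this commutation across the decomposition of $\sigma$ yields $D_{T(n)}\sigma(M)\cong \cS^{-}_{\iota(1)}\cdots \cS^{-}_{\iota(n_{M})}(D_{T(n)}M)$. To conclude, I must check that $\{\iota(1),\dots,\iota(n_{M})\}$ is precisely the set of sources of $B_{+1}(D_{T(n)}M)$, which I expect to be the main technical obstacle. It amounts to showing that $\iota(\supp M)=\supp(D_{T(n)}M)$, that $\iota$ carries $B_{+1}(M)$ bijectively onto $B_{+1}(D_{T(n)}M)$, and that it swaps the two colours of the bipartition; these facts follow from the definition of $D_{T(n)}$ as a dimension-preserving, arrow-reversing functor twisted by $\iota$, combined with the compatibility $\pi_{\lambda}\circ D_{T(n)}\cong D\circ \pi_{\lambda}$ which pins down how $\iota$ interacts with the covering. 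Granting this bookkeeping, the right-hand side becomes $\sigma^{-}(D_{T(n)}M)$ by the dual of Lemma \ref{FK}, giving the first isomorphism, and the second isomorphism follows formally as indicated at the start.
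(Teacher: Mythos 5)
Your argument is correct, but it takes a genuinely different --- and much more explicit --- route than the paper. The paper's entire proof of Lemma \ref{Tc} is one sentence declaring the isomorphisms to be a consequence of Lemma \ref{crf}; the real content is outsourced to the identity $\sigma^-(D_{T(n)}M)=\cS^-_{1}\cdots\cS^-_{n_M}(D_{T(n)}M)$, which the paper asserts ``by duality'' just before the corollary following Lemma \ref{FK} and never verifies. What you do is prove exactly that unverified step: the vertex-by-vertex BGP self-duality $D_{T(n)}\circ\cS^{+}_{x}\cong\cS^{-}_{\iota(x)}\circ D_{T(n)}$ (kernel dualising to cokernel), commutation of reflections at distinct sinks, and the bookkeeping that the covering involution carries $B_{+1}(M)$ onto $B_{+1}(D_{T(n)}M)$ while exchanging the two colours of the bipartition --- you are right to flag this last point as the main technical obstacle, since it is precisely what the paper's phrase ``by duality'' conceals. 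Your formal deduction of the second isomorphism from the first via $D_{T(n)}^{2}\cong\id$ and Lemma \ref{crf} (to identify $\sigma^{-1}$ with $\sigma^{-}$ on regular modules) is also sound. Two small points to make explicit for full rigour: first, $D_{T(n)}M$ is again regular indecomposable, so that the dual of Lemma \ref{FK} applies to it; this follows from $\pi_{\lambda}\circ D_{T(n)}\cong D\circ\pi_{\lambda}$, Lemma \ref{comm}, and the fact that $D$ preserves regular $\cK_n$-modules. Second, your symbol $\iota$ for the involution collides with the paper's index $\iota(M)$ from Theorem \ref{Theorem 1} and should be renamed. In sum, your proof buys a self-contained verification where the paper offers only a citation; the paper's route buys brevity at the cost of leaving the compatibility of $D_{T(n)}$ with the reflection functors unproved.
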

\begin{proof}
 As a consequence of Lemma  \ref{crf}, we have these isomorphisms.
\end{proof}

\section{middle terms of AR-sequences of  T(n)}\label{sec4}

In this section, we discuss the  middle terms of  AR-sequences in the regular components of $\modd(T(n),\Omega)$.  Now let $M\in\modd(T(n),\Omega)$ be a regular indecomposable module.  Combining with  Theorem \ref{Theorem 1}, and sticking to its notation and conventions in \cite{Claus},   we define 
\begin{enumerate}
\item $\pi(M): =(a_0, \cdots, a_b)$ the \textit{center path} of $M$,
\item $p(M):=a_0$, $q(M):=a_b$. 
\end{enumerate}
 Note that  $p(M)$ is the center of all sink modules in the $\sigma$-orbit of $M$ and $q(M)$ is the center of all  its source modules, respectively. If, furthermore,  $b(M)$ is the number of flow modules in the $\sigma$-orbit of $M$,  then $b(M)=d(p(M), q(M))$.  Recall that $i=\iota(M)$.  By Theorem \ref{Theorem 1},  we can get the following corollary.
 
 \begin{corollary}\label{iff}
 Let  $M\in\modd (T(n),\Omega)$ be a regular indecomposable module. Then
 \begin{enumerate} 
\item $M$ is a sink module if and only if $i \leq 0$,

\item $M$ is a  flow module if and only if $0<i\leq b(M)$,

\item $M$ is a source module if and only if $i>b(M)$.
\end{enumerate}

 \end{corollary}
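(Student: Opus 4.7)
The plan is to read the corollary directly off Theorem~\ref{Theorem 1}. Recall that by definition $\iota(M)=i$ means $M\cong\sigma^iM_0=M_i$, where $M_0$ is the unique sink module of minimal radius in the $\sigma$-orbit of $M$. Theorem~\ref{Theorem 1} gives a complete case-by-case description of the type (sink, flow, or source) of every $M_t$ as $t$ ranges over $\mathbb{Z}$, so the three equivalences in Corollary~\ref{iff} should fall out by matching the three ranges of $t$ with the three parts of that theorem.

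Concretely, I would handle the forward directions as follows. If $i\leq 0$, set $j=-i\geq 0$; Theorem~\ref{Theorem 1}(1) says that $M_{-j}$ is a sink module, giving (a). If $0<i\leq b(M)$, Theorem~\ref{Theorem 1}(2) directly says $M=M_i$ is a flow module, giving (b). If $i>b(M)$, write $i=b(M)+1+j$ with $j\geq 0$; Theorem~\ref{Theorem 1}(3) then yields that $M_{b+1+j}$ is a source module, giving (c).

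For the converse directions, the observation is that the three ranges $\{i\leq 0\}$, $\{1\leq i\leq b(M)\}$, $\{i>b(M)\}$ partition $\mathbb{Z}$, while the three module types are mutually exclusive by definition (a diameter path cannot simultaneously have two sink endpoints, two source endpoints, and a sink/source mismatch). Hence once the forward implications are established, the reverse implications follow from this exhaustion/disjointness argument: if $M$ is, say, a flow module, then $i$ cannot fall in either of the other two ranges without forcing $M$ to be a sink or source module, contradicting mutual exclusivity.

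I do not anticipate any real obstacle; the corollary is essentially a translation of Theorem~\ref{Theorem 1} into the language of the index $\iota$. The only edge case worth double-checking is $b(M)=0$, where there are no flow modules in the orbit; then part (b) asserts the equivalence of two empty conditions ($M$ flow $\Leftrightarrow$ $0<i\leq 0$), which is consistent with Theorem~\ref{Theorem 1}(2) being vacuous in that situation.
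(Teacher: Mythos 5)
Your proposal is correct and matches the paper's intent exactly: the paper offers no separate proof, stating only that the corollary follows from Theorem~\ref{Theorem 1}, and your argument (forward directions by matching the three index ranges to the three cases of that theorem, converses by the partition of $\mathbb{Z}$ together with the mutual exclusivity of the three module types) is precisely the reading the paper has in mind.
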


Let $ \cD$ be a regular component of  $\modd(T(n),\Omega)$,  and let $M,M'\in$  $\cD$ be two regular  indecomposable  modules.  Suppose that the $\sigma$-orbit of $M$ (or $M'$)  contains  $b$ (or $b'$) flow modules. According to \cite[Section 7]{Claus}, we have  $p(M)=p(M')$, $q(M)=q(M')$ and  $b=b'$. This implies that for the $\sigma$-orbits of $M$ and $M'$, the centers of all the sink modules or source modules are the same.  We also have  $r_0(M)-ql(M)=r_0(M')-ql(M')$, where $r_0(M)=r(M_0)$.  Then we define

\begin{center}
$p(\cD)=p(M)$,

$q(\cD)=q(M)$,

  $b(\cD)=d(p(M),q(M))$,

  $r(\cD)=r_0(M)-ql(M)$.
\end{center}
We have $p(M)=p(\sigma^t M)=p(M')=p(\sigma^{t'} M')$ and $q(M)=q(\sigma^t M)=q(M')= q(\sigma^{t'} M')$ for any $t, t' \in \mathbb{Z}$. That is,  $\pi(\sigma^t M)=\pi(\sigma^{t'} M')$ and $b(M)=b(\sigma^t M)=b(M')=b(\sigma^{t'} M')=b$.

 We say that a module $Y\in$ mod$(T(n),\Omega)$ is a \textit{sink} (\textit{source}, or \textit{flow}) module if  all its indecomposable direct summands are  sink (source, or  flow)  modules with the same center. Suppose that $Y=\oplus_{i\in I}Y^i$, where $Y^i$ is an indecomposable sink (source, or flow) module for all $i$, and $C(Y^i)=C(Y^j),i,j\in I$. Then we define $r(Y)=\max \{r(Y^i)\mid i\in I\}, C(Y)=C(Y^i)$. Let $0  \rightarrow X\rightarrow Y \rightarrow Z \rightarrow 0$ be an AR-sequence in a regular component $\mathcal{D}$. According to \cite[Lemma 2.2]{Claus1}, there exist at most two indecomposable direct summands in the middle term $Y$. Now suppose that  $Y=Y^1\oplus Y^2$, where $Y^1,Y^2\in\modd(T(n),\Omega)$ are indecomposable and $ql(Y^1)< ql(Y^2)$.  Then we have $b(Y^1)=b(Y^2)$ and $\iota(Y^1)=\iota(Y^2)$ [Theorem \ref{Theorem 3}].  In addition,  $C(Y^1)=C(Y^2)$ and $Y^1,Y^2$  are of the same type modules [Theorem \ref{Theorem 4}]. Hence we get $C(Y)=C(Y^2)$,  $r(Y)=r(Y^2) $. We define
\begin{center}
$\iota(Y): =\iota(Y^2)$ and  $b(Y): =b(Y^1)=b(Y^2)$. 
\end{center}
   In the following,  we focus on such middle term  $Y$ and let $Y=Y^2$ when $Y$ is indecomposable.

\begin{Lemma}\label{Lemma index}
Let $X, Y ,Z$ $\in\modd (T(n),\Omega)$ be regular modules.  Assume that
$0\rightarrow X\rightarrow Y\rightarrow Z\rightarrow 0$
is an AR-sequence. Then $\iota(Y)=\iota(\sigma Z)$. Moreover, $Y$ is a sink (flow, or source) module if and only if $\sigma Z$ is a sink (flow, or source) module.
 
\end{Lemma}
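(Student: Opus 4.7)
The plan is to reduce both claims to the numerical invariant $\iota$, and then invoke Corollary \ref{iff}.

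First, I would unpack the AR-sequence. Write $Y = \bigoplus_i Y^i$ with each $Y^i$ indecomposable (there are at most two summands by \cite[Lemma 2.2]{Claus1}). The sequence provides an irreducible map $Y^i \to Z$ for each $i$, and by Theorem \ref{Theorem 3}, each such map forces $\iota(Z) = \iota(Y^i) - 1$. Thus all summands $Y^i$ share a common index $\iota(Y^i) = \iota(Z) + 1$, which is consistent with the definition $\iota(Y) := \iota(Y^2)$ made in the paragraph before the lemma, and gives $\iota(Y) = \iota(Z) + 1$.

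Next I would compute $\iota(\sigma Z)$ directly from the definition. Writing $Z = \sigma^t Z_0$ with $Z_0$ the unique sink module of smallest radius in the $\sigma$-orbit of $Z$ (Theorem \ref{Theorem 1}), we have $\sigma Z = \sigma^{t+1} Z_0$. By Lemma \ref{S1} the module $\sigma Z$ is again regular indecomposable and belongs to the same $\sigma$-orbit as $Z$, so its sink module of smallest radius is the same $Z_0$. Hence $\iota(\sigma Z) = t+1 = \iota(Z)+1$, which matches $\iota(Y)$ and establishes the first claim.

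For the equivalence of sink/flow/source types I would invoke Corollary \ref{iff}, which classifies regular indecomposable modules according to where $\iota$ lies in relation to $0$ and $b(\cdot)$. Since $\sigma Z$ and $Z$ lie in the same $\sigma$-orbit, $b(\sigma Z) = b(Z)$; since $Z$ and every summand $Y^i$ lie in the same regular component $\cD$, the discussion immediately before the lemma gives $b(Y^i) = b(\cD) = b(Z)$, and therefore $b(Y) = b(\sigma Z)$. Combined with $\iota(Y) = \iota(\sigma Z)$, Corollary \ref{iff} yields the type-equivalence for each indecomposable summand $Y^i$ versus $\sigma Z$. A sink/flow/source module in our terminology is one all of whose summands are of that type with the same center; since all $Y^i$ have the same $\iota$ and, by Theorem \ref{Theorem 4}, the same center, this implies $Y$ is a sink (respectively flow, source) module iff $\sigma Z$ is.

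The main obstacle I anticipate is purely bookkeeping: making sure that the invariants $\iota$ and $b$ are compatible across the orientation change from $\Omega$ to $\sigma\Omega$, and that the definition of $\iota(Y)$ for a (possibly decomposable) middle term is consistent with $\iota$ of each summand. Once one confirms via Theorem \ref{Theorem 3} that both summands of $Y$ share the same index, everything else falls out of Corollary \ref{iff} essentially formally.
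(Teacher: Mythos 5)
Your proof is correct and follows essentially the same route as the paper: Theorem \ref{Theorem 3} gives $\iota(Z)=\iota(Y^i)-1$ for every summand, the definition of $\iota$ gives $\iota(\sigma Z)=\iota(Z)+1$, and Corollary \ref{iff} converts the equality of indices into the equivalence of types. You are somewhat more explicit than the paper about the invariance of $b$ and the common center of the summands, but these are exactly the ingredients the paper relies on in the paragraph preceding the lemma.
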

\begin{proof}
  We already know that  $\iota (Y^1)=\iota(Y^2)$  by Theorem \ref{Theorem 4}. We just need to prove $\iota(Y^2)=\iota(\sigma Z)$.  By   Theorem \ref{Theorem 3}, we get  $\iota(Z)=\iota(Y^2)-1$. Then  $\iota(\sigma Z)=\iota(Y^2)$.  This also tells us that   $Y$ and     $\sigma Z$  keep the same type [Corollary \ref{iff}].

\end{proof}

\begin{Theorem}\label{Lemma 3.3}
Let $\cD$ be a regular component of  $\modd (T(n),\Omega)$. Suppose that $0\rightarrow X\rightarrow Y\rightarrow Z\rightarrow 0$  is an AR-sequence in $\cD$.  Then  $C(Y)=C(\sigma Z)$ and $r(Y)=r(\sigma Z)+1$. 
 
\end{Theorem}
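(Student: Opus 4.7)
My plan is to reduce the theorem to the explicit formulas in Theorem \ref{Theorem 1}, after aligning $Y$ and $\sigma Z$ through their common index. By Lemma \ref{Lemma index} we have $\iota(Y) = \iota(\sigma Z) =: i$, and both are of the same type (sink, flow, or source); recall the conventions $C(Y) = C(Y^2)$ and $r(Y) = r(Y^2)$. Since $Y^2$ and $\sigma Z$ both lie in the regular component $\cD$, the discussion preceding the theorem tells us that they share the full combinatorial data $\pi(\cD) = (a_0, \dots, a_b)$, $p(\cD)$, $q(\cD)$, $b(\cD)$ and $r(\cD)$ of $\cD$, even though they generally sit in different $\sigma$-orbits.

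For the equality $C(Y) = C(\sigma Z)$, observe that Theorem \ref{Theorem 1} expresses $C(M)$, for a regular indecomposable $M \in \cD$, as a function of $\iota(M)$ and the center path of $\cD$ alone: $a_0$ when $\iota \leq 0$, $\{a_{\iota-1}, a_\iota\}$ when $1 \leq \iota \leq b$, and $a_b$ when $\iota > b$. Since the indices of $Y^2$ and $\sigma Z$ agree, this immediately yields $C(Y^2) = C(\sigma Z)$. For the radius, Theorem \ref{Theorem 1} writes $r(M)$ in each of the three cases as $r_0(M)$ plus a correction depending only on $\iota(M)$ and $b$; matching cases again, this correction cancels in the difference, giving
\[
r(Y^2) - r(\sigma Z) \;=\; r_0(Y^2) - r_0(\sigma Z) \;=\; \ql(Y^2) - \ql(\sigma Z),
\]
where the second equality uses the identity $r_0(M) = r(\cD) + \ql(M)$ built into the definition of $r(\cD)$. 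It then suffices to note $\ql(\sigma Z) = \ql(Z)$ (since $\sigma$ preserves the $\mathbb{Z}A_\infty$-row) and $\ql(Y^2) = \ql(Z) + 1$, the latter being the standard shape of an AR-sequence in $\mathbb{Z}A_\infty$: when $Z$ is not quasi-simple, $Y = Y^1 \oplus Y^2$ with quasi-lengths $\ql(Z) - 1$ and $\ql(Z) + 1$, while for $Z$ quasi-simple the middle term is the indecomposable $Y = Y^2$ of quasi-length $2$.

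I expect the main obstacle to be of a bookkeeping nature rather than a conceptual one: ensuring that the combinatorial data $\pi(\cD), p(\cD), q(\cD), b(\cD), r(\cD)$ really are constant on the whole component $\cD$, so that $Y^2$ and $\sigma Z$ share them without being in the same $\sigma$-orbit. This is already asserted in the paragraph preceding the theorem, following \cite[Section 7]{Claus}, so no independent verification is needed; once accepted, the argument collapses to a uniform three-case comparison driven by Theorem \ref{Theorem 1} and the quasi-length increment of $\mathbb{Z}A_\infty$ AR-sequences.
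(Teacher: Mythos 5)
Your proof is correct, but it takes a genuinely different route from the paper's. The paper proves $C(Y)=C(\sigma Z)$ by a case analysis on the type of $\sigma Z$ (using Propositions \ref{m sink}, \ref{Flow}, \ref{3.3}), and proves $r(Y)=r(\sigma Z)+1$ by induction on $\ql(Z)$: the base case $\ql(Z)=1$ splits into three sub-cases invoking Ringel's Lemma 3 and Lemma $3^*$ to identify $B(Y)$ with $B(Z)$ or $B(\sigma^2 Z)$, and the inductive step uses $r(Y^2)=r(Y^1)+2$ from Theorem \ref{Theorem 4}. You instead read both invariants directly off the orbit formulas of Theorem \ref{Theorem 1}: since $\iota(Y^2)=\iota(\sigma Z)$ (Lemma \ref{Lemma index}) and the orbit data $\pi,b$ and the quantity $r_0-\ql$ are constant on the component, the case corrections cancel and everything reduces to $\ql(Y^2)=\ql(Z)+1$ and $\ql(\sigma Z)=\ql(Z)$. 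This is in effect the paper's argument for the flow sub-case of its base case, promoted to a uniform argument covering all types and all quasi-lengths at once; it buys a shorter proof at the price of leaning harder on the component-invariance of $\pi(\cD),b(\cD),r(\cD)$ imported from Ringel's Section 7 --- a dependence the paper also incurs, though only in one branch. One small imprecision worth fixing: $\sigma Z$ lies in $\sigma\cD\subseteq\modd(T(n),\sigma\Omega)$ rather than in $\cD$ itself, so you should justify $r_0(\sigma Z)-\ql(\sigma Z)=r(\cD)$ by noting that $r_0$ is a shift-orbit invariant and $\ql(\sigma Z)=\ql(Z)$ via the equivalence of Lemma \ref{crf}, rather than by membership in $\cD$.
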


\begin{proof}

 We firstly assume that $\sigma Z$ is a sink module, then $Y$  and $Z$ would be  sink modules using Lemma \ref{Lemma index} and Proposition \ref{m sink}$(1)$. Then $C(Y)=p(Y)=p(Z)=C(Z)=C(\sigma Z)$. Module $\sigma^2 Z=X$ would be a source module whenever  modules $Y$ and $\sigma Z$ are both source modules  by Proposition \ref{3.3},  whence we still have $C(Y)=q(Y)=q(Z)=C(\sigma^2 Z)=C(\sigma Z)$ in this case.

By \cite[Proposition 1.$(b)$]{Claus}, the radii of all its flow modules  are  the same in the  $\sigma$-orbit of a regular indecomposable module. Moreover,   centers of those flow modules just follow the path from $p(M)$ to $q(M)$, that is, $\pi(M)$  [Proposition \ref{Flow}$(1)$].   However,   $\pi (Y)=\pi(Z)=\pi (\sigma Z)$ and $b(Y)=b(Z)=b(\sigma Z)$.  Hence  we must have $C(Y)=C(\sigma Z)$  when both of them are  flow modules. 
 
For the  second assertion, we use induction on the quasi-length of $Z$. Suppose that $ql(Z)=1$.  This implies that $Z$ is quasi-simple and $Y=Y^2$. In addition, $Z$ is  the quasi-top of  $Y$. Suppose that $\sigma Z$ is a sink module.   Then  $Y$ is a sink module  [Lemma \ref{Lemma index}]. By \cite[Lemma 3]{Claus}, we get   $B(Y)=B(Z)$, whence $Z$ is a sink and $r(Z)=r(\sigma Z)+1$ [Proposition \ref{m sink}$(1)$]. Then $r(Y)=r(Z)=r(\sigma Z)+1$. Suppose that    $\sigma Z$ is a source module. Then $Y$ is a source module and $X=\sigma^2 Z$ is the  quasi-socle of $Y$.  Then $B(Y)=B(\sigma^2 Z)$ \cite[Lemma 3$^*$]{Claus}. On the other hand, there exists $r(\sigma^2 Z)=r(\sigma Z)+1$, whence $r(Y)=r(\sigma Z)+1$ in this case using Proposition \ref{3.3}. Suppose that $\sigma Z$ is a flow module.  We get    $Y$ being a flow  module [Lemma \ref{Lemma index}].  But  we have seen that all flow modules have the same radius in the $\sigma$-orbit of a regular indecomposable module. Moreover,  $r(\sigma Z)=r(Z_0)-1$ and $r(Y)=r(Y_0)-1$.  Note that  $r_0(Y)-ql(Y)=r_0(Z)-ql(Z)$, $ql(Y_0)=ql(Y)$ and $ql(Z)=ql(Z_0)$ [Lemma \ref{crf}]. Since $ql(Y)=ql(Z)+1$, we have $r_0(Y)=r(Y_0)=r_0(Z)+1=r(Z_0)+1$. Finally, we get $r(Y)=r(\sigma Z)+1$.

Suppose that $ql(Z)=2$.  Then there exists a module $Z''\in \cD$ such that 
 
\begin{center}

\begin{tikzpicture}
\node (00) at (0,0) {$\circ$};
\node (02) at (0,2) {$\cdots$ $\circ$};
\node (11) at (1,1) {$\circ$};
\node (13) at (1,3) {$\circ$};
\node [above] at (13.north) {$\vdots$};
\node (01) at (0,1) {$\cdots$};
\node (81) at (8,1) {$\cdots$};
\path [->] (00) edge (11);
\path [->] (02) edge (11)  edge (13);

\node (20) at (2,0) {$\circ$};

\node (22) at (2,2) {$\circ$};

\path [->] (11) edge (20) edge (22);
\path [->] (13) edge (22) ;
\node (31) at (3,1) {$X$};

\node (33) at (3,3) {$\circ$};

\path [->] (20) edge (31);
\path [->] (22) edge (31)  edge (33);

\node (40) at (4,0) {$Y^1$};

\node (42) at (4,2) {$Y^2$};

\path [->] (31) edge (40) edge (42);
\path [->] (33) edge (42) ;
\node (51) at (5,1) {$Z$};

\node (53) at (5,3) {$\circ$};

\path [->] (40) edge (51);
\path [->] (42) edge (51)  edge (53);

\node (60) at (6,0) {$Z''$};

\node (62) at (6,2) {$\circ$};

\path [->] (51) edge (60) edge (62);
\path [->] (53) edge (62) ;
\node (71) at (7,1) {$\circ$};
\node (73) at (7,3) {$\circ$};
\node [above] at (73.north) {$\vdots$};
\path [->] (60) edge (71);
\path [->] (62) edge (71)  edge (73);

\node (80) at (8,0) {$\circ$};

\node (82) at (8,2) {$\circ$ $\cdots$};

\path [->] (71) edge (80) edge (82);
\path [->] (73) edge (82) ;

\path [->, dashed] (20) edge (00);
\path [->, dashed] (40) edge (20);
\path [->, dashed] (60) edge (40);
\path [->, dashed] (80) edge (60);
\path [->, dashed] (31) edge (11);
\path [->, dashed] (51) edge (31);
\path [->, dashed] (71) edge (51);
\path [->, dashed] (22) edge (02);
\path [->, dashed] (42) edge (22);
\path [->, dashed] (62) edge (42);
\path [->, dashed] (82) edge (62);

\end{tikzpicture}
\end{center}
We have $ql(Y^1)=ql(Z'')=1$ and $ql(Z)=2$.  The sequences $0\rightarrow Y^1\rightarrow Z\rightarrow Z''\rightarrow 0$  and $0\rightarrow \sigma Y^1\rightarrow \sigma Z\rightarrow \sigma Z''\rightarrow 0$ are  AR-sequences [Lemma \ref{crf}]. There are the equalities 
\begin{center}

$r(Z)=r(\sigma Z'')+1$,   and  $r(\sigma Z)=r(\sigma^2 Z'')+1$.

\end{center}
Hence we have 
\begin{center}
$r(Y)=r(Y^2)=r(Y^1)+2=r(\sigma^2 Z'')+2=r(\sigma Z)+1$
\end{center}
by Theorem \ref{Theorem 4}. Suppose that $ql(Z)> 2$. Similarly,  there exist modules $Z', W\in \cD$  such that

\begin{center}
\begin{tikzpicture}

\node (02) at (0,2) {$\cdots$ $\circ$};
\node (04) at (0,4) {$\cdots$ $\circ$};

\node (64) at (6,4) {$\circ$};
\node [above] at (64.north) {$\vdots$};

\node (11) at (1,1) {$\circ$};
\node (13) at (1,3) {$\circ$};

\path [->] (02) edge (11)  edge (13);
\path [->] (04) edge (13);

\node (22) at (2,2) {$\circ$};
\node [below] at (22.south) {$\vdots$};
\node (24) at (2,4) {$\circ$};
\node [above] at (24.north) {$\vdots$};
\path [->] (11)  edge (22);
\path [->] (13) edge (22) edge (24);
\node (31) at (3,1) {$\circ$};
\node (33) at (3,3) {$X$};

\path [->] (22) edge (31)  edge (33);
\path [->] (24) edge (33);

\node (42) at (4,2) {$Y^1$};

\node (44) at (4,4) {$Y^2$};

\path [->] (31) edge (42);
\path [->] (33) edge (42) edge (44);
\node (51) at (5,1) {$\circ$};
\node [above] at (51.north) {$Z'$};
\node (53) at (5,3) {$Z$};

\path [->] (42) edge (51)  edge (53);
\path [->] (44) edge (53);

\node (62) at (6,2) {$W$};
\node [below] at (62.south) {$\vdots$};

\path [->] (51) edge (62);
\path [->] (53) edge (62) edge (64);
\node (71) at (7,1) {$\circ$};
\node (73) at (7,3) {$\circ$};

\path [->] (62) edge (71)  edge (73);
\path [->] (64) edge (73);

\node (82) at (8,2) {$\circ$ $\cdots$};
\node (84) at (8,4) {$\circ$ $\cdots$};
\path [->] (71)edge (82);
\path [->] (73) edge (82) edge (84);
\path [->, dashed] (22) edge (02);
\path [->, dashed] (42) edge (22);
\path [->, dashed] (62) edge (42);
\path [->, dashed] (82) edge (62);
\path [->, dashed] (33) edge (13);
\path [->, dashed] (53) edge (33);
\path [->, dashed] (73) edge (53);

\end{tikzpicture}
\end{center}
In this case, we have $r(Y^1)=r(\sigma Z')+1$ by induction. However,  $r(Y^2)=r(Y^1)+2$ [Theorem \ref{Theorem 4}]. We get $r(Y)=r(\sigma Z')+3=r(\sigma Z)+1$ by $r(Z)=r(\sigma Z')+2$.
\end{proof}

\begin{proposition}
Let $\cD$ be a regular component of $\modd (T(n),\Omega)$. Then  $r(\cD)=r(\sigma \cD)$.

\end{proposition}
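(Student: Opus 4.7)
The plan is to compute $r(\sigma\cD) = r_{0}(\sigma M) - ql(\sigma M)$ for any regular indecomposable $M \in \cD$, and match both terms to the corresponding invariants of $M$. This reduces the statement to two claims: $r_{0}(\sigma M) = r_{0}(M)$ and $ql(\sigma M) = ql(M)$.

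First I would observe that the $\sigma$-orbits of $M$ and of $\sigma M$ coincide as sets of (isomorphism classes of) modules: explicitly, $\{\sigma^{j}(\sigma M) : j \in \mathbb{Z}\} = \{\sigma^{i}M : i \in \mathbb{Z}\}$. The smallest-radius sink module $M_{0}$ produced by Theorem \ref{Theorem 1} is an intrinsic feature of this orbit, namely the unique module of smallest radius among those which are sink modules for their ambient orientation. Applying Theorem \ref{Theorem 1} to the regular indecomposable $\sigma M \in \modd(T(n),\sigma\Omega)$ (regularity and indecomposability are guaranteed by Lemma \ref{S1}) must therefore select the very same module, that is, $(\sigma M)_{0} = M_{0}$. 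In particular $r_{0}(\sigma M) = r(M_{0}) = r_{0}(M)$, and the center path $\pi(\sigma M) = (a_{0},\dots,a_{b})$ agrees with $\pi(M)$, with the only change being that $\iota(\sigma M) = \iota(M) + 1$.

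Next I would verify $ql(\sigma M) = ql(M)$ using Lemma \ref{crf}: the functor $\sigma$ is an equivalence between the full subcategories of regular modules of $\modd(T(n),\Omega)$ and $\modd(T(n),\sigma\Omega)$, with quasi-inverse $\sigma^{-}$. Any such equivalence preserves AR-sequences and irreducible morphisms, and hence transports the regular $\mathbb{Z}A_{\infty}$-component $\cD$ onto $\sigma\cD$ as AR-quivers, sending quasi-simple modules to quasi-simple modules and preserving quasi-length along the irreducible mono-chains used to define $ql$.

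Putting the two identities together gives
\[
   r(\sigma\cD) \;=\; r_{0}(\sigma M) - ql(\sigma M) \;=\; r_{0}(M) - ql(M) \;=\; r(\cD),
\]
which is the desired equality. I do not expect a genuine obstacle here: the argument is essentially an unpacking of definitions together with the orbit description in Theorem \ref{Theorem 1} and the equivalence in Lemma \ref{crf}. The only point that requires attention is the orientation-dependence of the notion of sink module, which is handled by the observation that ``smallest-radius sink module for its own ambient orientation'' is an invariant of the orbit as a set, independent of the choice of base point.
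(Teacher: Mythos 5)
Your proposal is correct and follows essentially the same route as the paper: the paper's proof likewise reduces the claim to $r_0(\sigma M)=r(M_0)=r_0(M)$ (implicitly, via the shared shift orbit) and $ql(\sigma M)=ql(M)$ (via Lemma \ref{crf}), then subtracts. Your version merely makes the orbit-invariance of $M_0$ explicit, which the paper leaves tacit.
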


\begin{proof}
Let $M\in \cD$ be a regular indecomposable module. By the definition, we have $r(\cD)=r_0(M)-ql(M)$. However, $r_0(M)=r(M_0)$ and $ql(M)=ql(\sigma M)$ [Lemma \ref{crf}]. Hence $r(\sigma \cD)=r_0(\sigma M)-ql(\sigma M)=r(M_0)-ql(\sigma M)=r(\cD)$.

\end{proof}

\begin{proposition}\label{mt}
Let $X,  Y, Z \in\modd (T(n),\Omega)$ be regular  modules. Suppose that
\begin{center}
 $0\rightarrow X\rightarrow Y\rightarrow Z\rightarrow 0$
\end{center}
is an AR-sequence with $T(X)_0\cap T(Z)_0=\emptyset$. Then $Y$ is a flow module.
\end{proposition}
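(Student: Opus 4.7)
The plan is to argue by contradiction on the trichotomy (sink / flow / source) from Corollary \ref{iff}, using Lemma \ref{Lemma index} to transport information between $Y$ and the translates of $Z$. The key algebraic input is that $X = \tau Z = \sigma^{2} Z$ in a regular $\mathbb{Z}A_{\infty}$-component, so by Theorem \ref{Theorem 3} the indices satisfy
\[
\iota(Z) \;=\; \iota(Y)-1, \qquad \iota(X) \;=\; \iota(Z)+2 \;=\; \iota(Y)+1.
\]
Combined with the characterization of sink / flow / source indecomposables via $\iota(M)\le 0$, $0<\iota(M)\le b$, or $\iota(M)>b$ respectively (Corollary \ref{iff}), the type of each of $X$, $Y$, $Z$ is forced once the type of $Y$ is known.

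I would first dispose of the degenerate case $b(\cD)=0$: then the center path is a single vertex $p=q$, every module in the $\sigma$-orbit of anything in $\cD$ is either a sink or a source with center $p$ (no flow modules exist), and Theorem \ref{Theorem 1} forces $p\in T(M)_0$ for every such $M$. Hence $p\in T(X)_0\cap T(Z)_0$, contradicting the hypothesis; so we may assume $b\geq 1$.

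Now assume, for contradiction, that $Y$ is not a flow module. By Lemma \ref{Lemma index}, $Y$ is either a sink module ($\iota(Y)\le 0$) or a source module ($\iota(Y)>b$). Suppose $Y$ is a sink module. Then $\iota(Z)\le -1$, so $Z$ is a sink module with center $p=p(\cD)$, giving $p\in T(Z)_0$. If additionally $\iota(Y)<0$, then $\iota(X)\le 0$ and $X$ is also a sink module with center $p$, whence $p\in T(X)_0$; and if $\iota(Y)=0$, then $\iota(X)=1$ so $X$ is a flow module whose center by Theorem \ref{Theorem 1}(2) is the edge $\{a_0,a_1\}=\{p,a_1\}$, which still forces $p\in T(X)_0$. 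In either subcase $p\in T(X)_0\cap T(Z)_0$, a contradiction. The case where $Y$ is a source module is completely symmetric, using $q=q(\cD)=a_b$ in place of $p$: one checks that $q\in T(Z)_0$ (either $Z$ is source with center $q$, or $\iota(Z)=b$ and $Z$ is flow with center $\{a_{b-1},q\}$) and $q\in T(X)_0$ (since $\iota(X)>b$ makes $X$ a source with center $q$), again contradicting $T(X)_0\cap T(Z)_0=\emptyset$. Therefore $Y$ must be a flow module.

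There is no real obstacle here beyond bookkeeping: once the identity $X=\sigma^2 Z$ and the index dictionary of Corollary \ref{iff} are in place, the geometric content of Theorem \ref{Theorem 1}—that sink, source, and flow modules in a given $\sigma$-orbit have centers constrained to $p$, $q$, and edges of the fixed center path $\pi(\cD)$—forces the endpoints of the AR-sequence to share either $p$ or $q$ in their supporting trees unless $Y$ is a flow module. The only subtlety worth flagging is the boundary configuration $\iota(Y)=0$ (respectively $\iota(Y)=b+1$), where $X$ (respectively $Z$) jumps type across the sink/flow (respectively flow/source) boundary; but even there the center of the flow module contains $p$ (respectively $q$), so the disjointness hypothesis fails.
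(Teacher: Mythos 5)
Your proof is correct, and it reaches the same contradiction as the paper --- namely that if $Y$ is not a flow module then $T(X)_0$ and $T(Z)_0$ are forced to share a center vertex --- but it gets there by a different route. The paper reduces via Lemma \ref{Lemma index} to the type of $\sigma Z$ and then tracks centers \emph{locally}, applying the one-step Propositions \ref{m sink}, \ref{Flow} and \ref{3.3} to the pair of shifts $\sigma Z \mapsto Z$ and $\sigma Z\mapsto \sigma^2 Z=X$, concluding $C(X)\cap C(Z)\neq\emptyset$ without ever naming the common vertex in terms of the component. You instead work \emph{globally}: from $X=\sigma^2 Z$ and Theorem \ref{Theorem 3} you pin down $\iota(X)=\iota(Z)+2=\iota(Y)+1$, and then Corollary \ref{iff} together with the orbit description in Theorem \ref{Theorem 1} identifies the shared vertex explicitly as $p(\cD)$ (in the sink case) or $q(\cD)$ (in the source case). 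The index bookkeeping is what makes the separate treatment of $b(\cD)=0$ necessary in your version --- your boundary subcase $\iota(X)=1$ invokes Theorem \ref{Theorem 1}(2), which requires $b\geq 1$ --- whereas the paper's local propositions are insensitive to $b$ and cover that situation automatically. What your approach buys is a cleaner conceptual picture (the obstruction is precisely that non-flow middle terms pull both ends of the sequence onto an endpoint of the center path); what the paper's buys is brevity and independence from the global invariants $p(\cD)$, $q(\cD)$, $b(\cD)$. Both are complete.
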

\begin{proof}
By Lemma \ref{Lemma index}, it is enough to prove that $\sigma Z $ is a flow module.
\begin{enumerate}

\item Suppose that $\sigma Z$ is a sink module with $C(\sigma Z)=x_0$, where $x_0\in T(\sigma Z)_0$.  According to  Proposition \ref{m sink}$(1)$,  $Z$ is a sink module and $C(Z)=x_0$.  For $\sigma^2 Z=X$, we have $C(X)=C(\sigma Z)=x_0$ when $X$ is a source module [Proposition \ref{m sink}$(3)$ ], and $C(X)\cap C(Z)=x_0$ when $X$ is a flow module [Proposition \ref{m sink}$(2)$ ].    Hence we always have  $C(Z)\cap C(X)=C(\sigma Z)\cap C(\sigma^2 Z)\neq \emptyset$ in this case. This  contradicts  our assumption $T(X)_0\cap T(Z)_0=\emptyset$.

\item Suppose that $\sigma Z$ is a source module with $C(\sigma Z)=y_0$,  $y_0\in T(\sigma Z)_0$. Then $\sigma^2 Z=X$ is a source module and $C(X)=C(\sigma Z)$ [Proposition \ref{3.3}]. Module $Z$ has 3 possibilites: sink, flow or source. By Proposition \ref{m sink}, \ref{Flow}, \ref{3.3}, we know that  $C(Z)= C(\sigma Z)=y_0$ when $Z$ is a sink or source module.  When $Z$ is a flow module, we have $C(Z)\cap C(\sigma Z)=y_0$, where $C(Z)=\{y'_0, y_0\}, y'_0\in T(n)_0$.  Since $T(X)_0\cap T(Z)_0=\emptyset$ and $C(X)\cap C(Z)=C(\sigma Z)\cap C(Z)\neq \emptyset$,  this case also cannot happen.
\end{enumerate}

Together with $(a)$ and $(b)$, we can see that $\sigma Z$ is a flow module.
\end{proof}

\begin{example}
Let $Z\in$ mod$(T(3),\Omega)$ be a sink module with the  following support,

\begin{center}

\tikzstyle{level 1}=[sibling angle=120,level distance = 30, ->]
\tikzstyle{level 2}=[sibling angle=90,level distance = 30, <-]
\tikzstyle{level 3}=[sibling angle=60,level distance = 30, ->]
\tikzstyle{level 4}=[sibling angle=45,level distance = 30, <-]
\tikzstyle{every node}=[]
\tikzstyle{edge from parent}=[segment angle=10,draw]
\begin{tikzpicture}[grow cyclic, shape=circle,cap=round, scale=1, every node/.style={scale=0.7}]
\node {$a_0$} 
    child { node {$\circ$} child {node {$\circ$}  } child {node {$\circ$} }}
    child { node {$a_3$} child {node {$\circ$}  } child {node {$\circ$}}}
    child { node {$a_1$} child {node {$\circ$}  } child {node {$\circ$} }};
\draw [dashed, line width=0.1pt]circle(1cm);
\draw [dashed, line width=0.1pt]circle(2cm);

\end{tikzpicture}

\end{center}
 where dim$_k Z_a=1, a\in T(Z)_0$ and $Z(\gamma_i)=\lambda_i,\lambda_i\in k\setminus\{0\},i\in\{1,3\}$.  Suppose that $0\rightarrow X\rightarrow Y\rightarrow Z\rightarrow 0$ is an AR-sequence. Then support of $X=\sigma^2Z$ would look like the following.
\begin{center}

\tikzstyle{level 1}=[sibling angle=120,level distance = 30, ->]
\tikzstyle{level 2}=[sibling angle=90,level distance = 30, <-]
\tikzstyle{level 3}=[sibling angle=60,level distance = 30, ->]
\tikzstyle{level 4}=[sibling angle=45,level distance = 30, <-]
\tikzstyle{every node}=[]
\tikzstyle{edge from parent}=[segment angle=10,draw]
\begin{tikzpicture}[grow cyclic, shape=circle,cap=round, scale=1, every node/.style={scale=0.7}]
\node {$\circ$} 
    child { node {$b_2$} child {node {$b_1$} } child {node {$b_3$} }}
    child { node {$\circ$} child {node {$\circ$} } child {node {$\circ$} }}
    child { node {$\circ$} child {node {$\circ$} } child {node {$\circ$} }};
\draw [dashed, line width=0.1pt]circle(1cm);
\draw [dashed, line width=0.1pt]circle(2cm);

\end{tikzpicture}

\end{center}
Hence $X$ is a source module. Moreover, we have $T(X)_0\cap T(Z)_0=\emptyset$.  Let $(a_1,a_0,a_3)$ be a diameter path of $T(Z)$. By direct computation,  module $\sigma Z: k \xrightarrow{\gamma_2} k,\gamma_2\in k\setminus\{0\}$  is a flow module. Then $C(\sigma Z)=\{a_0,b_2\}$ and $r(\sigma Z)=0$.  According to Lemma \ref{Lemma index} or Proposition \ref{mt},   $Y$ is a flow module.  By Theorem \ref{Lemma 3.3}, we have $r(Y)=r(\sigma Z)+1=1$  and $C(Y)=C(\sigma Z)=\{a_0,b_2 \}$.

\end{example}

 \section*{Declarations}

 \textbf{Ethical Approval}
 
  This declaration is not applicable.
\vspace{0.2cm}

 \textbf{Competing interests}
 
  This declaration is not applicable.
\vspace{0.2cm}

 \textbf{Authors' contributions}
 
  This declaration is not applicable.

\vspace{0.2cm}

 \textbf{Funding}
 
  This declaration is not applicable.

\vspace{0.2cm}

 \textbf{Availability of data and materials}
 
  This declaration is not applicable.


\begin{bibdiv}
\begin{biblist}
\addcontentsline{toc}{chapter}{\textbf{Bibliography}}

\bib{Assem1}{book}{
title={Elements of the representation Theory of Associative Algebras, I},
subtitle={Techniques of Representation Theory},
series={London Mathematical Society Student Texts},
author={I. Assem},
author={D. Simson},
author={A. Skowro\'nski},
publisher={Cambridge University Press},
date={2006},
address={Cambridge},
}

\bib{Kerner}{article}{
title={Representations of Wild Quivers Representation theory of algebras and related topics},
author={Kerner, O.},
journal={CMS
Conf. Proc.},
volume={19},
date={1996},
pages={65-107},

}

\bib{Bongartz}{article}{
title={Covering spaces in representation theory},
author={K.Bongartz and P.Gabriel},
journal={Inventiones mathematicae},
volume={65},
date={1981/82},

pages={331-378},
}

\bib{Claus4}{webpage}{
title = {Covering theory},
author = {Michael Ringel, Claus},
date = {2016},
url = {https://www.math.uni-bielefeld.de/~ringel/lectures/izmir/izmir-6.pdf},

}

\bib{Claus}{article}{
title = {The shift orbits of the graded Kronecker modules},
author = {Claus Michael Ringel},

journal = {Mathematische Zeitschrift},
volume = {290},
date = {2018},
pages = {1199-1222},
number = {3},
}

\bib{Daniel}{webpage}{
title = {Representations of Regular Trees and Invariants of AR-Components for Generalized Kronecker Quivers},
author = {Bissinger, Daniel},
date = {2018},
url = {https://macau.uni-kiel.de/servlets/MCRFileNodeServlet/dissertation_derivate_00007342/DissertationDanielB.pdf}
}

\bib{Claus1}{article}{
title = {Finite-dimensional hereditary algebras of wild representation type},
author = {Claus Michael Ringel},

journal = {Mathematische Zeitschrift},
volume = {161},
date = {1978},
pages = {235-255},

}

\end{biblist}
\end{bibdiv}

\end{document}